\numberwithin{equation}{section}
\numberwithin{figure}{section}
\date{}
\newtheorem{thm}{Theorem}
\newtheorem{thrm}{Theorem}[section]
\newtheorem{prop}[thrm]{Proposition}
\newtheorem{lem}[thrm]{Lemma}
\newtheorem{cor}[thrm]{Corollary}
\theoremstyle{remark}
\theoremstyle{definition}
\newtheorem*{dfn}{Definition}
\newtheorem{df}[thrm]{Definition}
\renewcommand{\epsilon}{\varepsilon}
\newcommand{\R}{\mathbb{R}}
\newcommand{\C}{\mathbb{C}}
\newcommand{\lft}[1]{\left#1}
\newcommand{\rgt}[1]{\right#1}
\newcommand{\inter}{\operatorname{int}}
\newcommand{\rd}{\partial}
\newcommand{\uvv}[1]{\partial/\partial #1}
\newcommand{\xist}{\xi_\textup{std}}
\newcommand{\alphast}{\alpha_\textup{std}}
\newcommand{\lambdast}{\lambda_{\textup{can}}}
\newcommand{\op}[1]{\mathcal{O}p (#1)}
\newcommand{\contot}[3]{\mathfrak{Cont}_{\textup{ot}} (#1;#2,#3)}
\newcommand{\cont}[3]{\mathfrak{cont} (#1;#2,#3)}
\newcommand{\vphi}{\varphi}
\newcommand{\sn}{\operatorname{CSN}}
\newcommand{\skor}{\perp'}
\newcommand{\ups}{U_\textup{PS}}
\newcommand{\omtw}{\omega_\texttt{tw}}
\newcommand{\lan}{\left\langle}
\newcommand{\ran}{\right\rangle}
\newcommand{\ot}{\textup{ot}}
\newcommand{\mps}{\mathcal{P}_{T^{n-1}}}
\begin{document} %**************************************************************
%
% Title
%
\title{Plastikstufe with toric core}
\author{Jiro ADACHI}
\maketitle

\renewcommand{\thefootnote}{\fnsymbol{footnote}}%
\footnote[0]{This work was supported 
%  by JSPS Grant-in-Aid for Scientific Research~(C) No.~25400077.}%
  by JSPS KAKENHI Grant Number 25400077.}%
\footnote[0]{2010 \textit{Mathematics Subject Classification.} \
   57R17, 53D35, 57R65.}%
\footnote[0]{\textit{Key words and phrases.} \
   Plastikstufe, Loose Legendrian submanifold, Overtwistedness.}
% \footnote[0]{Date: \today.}

\begin{abstract} %--------------------------------------------------------------
  Plastikstufes and overtwistedness for higher-dimensional contact manifolds 
are studied in this paper. 
 It is proved that a contact structure is overtwisted 
if and only if there exists a small plastikstufe with toric core 
that has trivial rotation.
\end{abstract} %----------------------------------------------------------------

%%% Section 1. %%%%%%%%%%%%%%%%%%%%%%%%%%%%%%%%%%%%%%%%%%%%%%%%%%%%%%%%%%%%%%%%%
\section{Introduction}\label{sec:intro}
%%%%%%%%%%%%%%%%%%%%%%%%%%%%%%%%%%%%%%%%%%%%%%%%%%%%%%%%%%%%%%%%%%%%%%%%%%%%%%%%
  ``Overtwisted'' is a remarkable class of contact structures 
where a parametric \textit{h}-principle holds. 
 This class for contact structures on higher-dimensional manifolds 
is introduced recently (see~\cite{boelmu}) 
although such a class is introduced for contact structures 
on $3$-dimensional manifolds few decades ago (see~\cite{eliash89}). 
 However, compared with the $3$-dimensional case, 
geometric characterization of this notion is still unclear. 
 There are some discussions and proposals in~\cite{boelmu} and~\cite{camupr}. 
 In this paper, a small improvement of one of the ideas in~\cite{camupr} 
is given.
 The improvement is suitable for the modification of contact structure 
introduced in~\cite{art21}. 

  Contact structure is a hyperplane field on an odd-dimensional manifold 
which is completely non-integrable. 
 Borman, Eliashberg, and Murphy proved the existence 
of a class of contact structures 
where contact structures homotopic to each other as almost contact structures 
are isotopic (see~\cite{boelmu}, \cite{eliash89}). 
 A contact structure in the class is said to be \emph{overtwisted}. 
 The class is defined or characterized 
by the existence of a certain piecewise smooth $2n$-dimensional disc 
embedded into a $(2n+1)$-dimensional contact manifold. 
 However the definition of such a disc is rather complex 
although it is comparatively easy in dimension~$3$ 
(see Subsections~\ref{sec:plastikstufe} and~\ref{sec:overtwisted}). 

  There are some proposals for characterizations of the overtwistedness. 
 A characterization by plastikstufe is given 
by Casals, Murphy, and Presas~\cite{camupr}. 
 A plastikstufe is introduced by Niederkr\"uger~\cite{niederkruger06} 
as an obstruction to symplectic fillability. 
 It is known that 
if a contact structure on a closed $3$-manifold is overtwisted 
then it never appears as a certain boundary 
of a compact symplectic $4$-manifold. 
 In this sense, a plastikstufe is regarded 
as a higher-dimensional generalization of an overtwisted $2$-dimensional disc. 

  In~\cite{camupr}, they impose some conditions on plastikstufes. 
 A \emph{plastikstufe} $\mathcal{P}$ 
is a certain product $D_\ot^2\times B^{n-1}$ 
of a simple overtwisted $2$-disc $D_\ot^2$ 
with a closed orientable $(n-1)$-dimensional manifold $B^{n-1}$ 
embedded into a $(2n+1)$-dimensional contact manifold $(M,\xi)$ 
(see Subsection~\ref{sec:plastikstufe} for definition). 
 This manifold $B^{n-1}$ is called the \emph{core}\/ of the plastikstufe. 
 By definition, a submanifold of $\mathcal{P}$ corresponding to 
$(0,1)\times B^{n-1}\subset D_\ot^2\times B^{n-1}$ is Legendrian, 
where $(0,1)\subset D_\ot^2$ corresponds to a part of a non-compact leaf 
of singular foliation on $D_\ot^2$ (see Figure~\ref{fig:3otdiscs}). 
 It is called a \emph{leaf ribbon}\/ of $\mathcal{P}$. 
 A plastikstufe $\mathcal{P}\subset(M,\xi)$ is said to be \emph{small}\/ 
if it is contained in an open ball in $(M,\xi)$. 
 When a plastikstufe $\mathcal{P}$ 
is small and with spherical core $B^{n-1}=S^{n-1}$, 
it is said to have the \emph{trivial rotation}\/ 
if a leaf ribbon of $\mathcal{P}$ is isotopic to a punctured Legendrian disc 
$\operatorname{int} D^n\setminus\{0\}$. 

  Then the following is given in~\cite{camupr}
(The ``only if'' part is due to the \textit{h}-principle in~\cite{boelmu}): 
%
% Theorem 1.1. -----------------------------------------------------------------
%
\begin{thrm}[Casals, Murphy, Presas]
    Let $(M,\xi)$ be a contact manifold of dimension $2n+1$. 
 The contact structure $\xi$ is overtwisted if and only if 
there exists a small plastikstufe with a spherical core $S^{n-1}$ 
that has trivial rotation.
\end{thrm}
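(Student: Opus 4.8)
The plan is to reduce the equivalence to the construction and analysis of a single model contact ball, since both sides of the statement are local: a small plastikstufe lies in a ball by definition, and the overtwisted $2n$-disc of \cite{boelmu} is a contact germ on a disc, hence also supported in a ball. So it suffices to carry out three steps. (i) Build one contact ball $(B^{2n+1},\xi_0)$ which agrees with the standard Darboux germ near $\partial B$, is homotopic rel $\partial B$ to the standard ball as an almost contact structure, and contains a small plastikstufe $\mathcal{P}_0=D_\ot^2\times S^{n-1}$ with trivial rotation. (ii) Prove directly that any contact manifold containing such a plastikstufe --- in particular $(B^{2n+1},\xi_0)$ --- is overtwisted. (iii) Use the classification \textit{h}-principle of \cite{boelmu} to move $(B^{2n+1},\xi_0)$ in and out of an arbitrary overtwisted contact manifold.

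Step (ii) is the ``if'' direction and is the heart of the matter. Given a small plastikstufe $\mathcal{P}=D_\ot^2\times S^{n-1}$ with trivial rotation inside a ball $U\subset(M,\xi)$, I would first normalize a neighbourhood of $\mathcal{P}$. The leaf ribbon $\Lambda\subset\mathcal{P}$ is a Legendrian embedding of $\operatorname{int}D^n\setminus\{0\}$; trivial rotation says it is Legendrian isotopic to a standard punctured disc in $(J^1\R^{n-1},\xist)$, and since the core is a sphere the puncture caps off, so $\op{\Lambda}$ carries the standard $1$-jet germ (in particular, $\Lambda$ sits in a standard loose chart). Propagating this germ in the $D_\ot^2$-direction, and using smallness to rule out any twisting of the conformal symplectic normal data, identifies $\op{\mathcal{P}}$ with an explicit product-type model assembled from the standard germ on the Legendrian disc and the ``simple overtwisted'' germ on $D_\ot^2$. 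Inside this model one then exhibits a copy of the overtwisted $2n$-disc of \cite{boelmu} by hand: the non-compact leaf of the singular foliation on $D_\ot^2$ --- a Legendrian ray inside the leaf ribbon --- supplies the distinguished radial direction of the overtwisted disc, the $(n-1)$ directions of the core $S^{n-1}$ fill out the rest of the $2n$-disc, and the normalization forced by trivial rotation makes the contact germ along this disc coincide with the standard overtwisted germ. Hence $\xi$ is overtwisted, and applying this to the ball of step (i) completes (ii).

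For step (iii), the ``only if'' direction, assume $\xi$ is overtwisted. By (ii) the ball $(B^{2n+1},\xi_0)$ of step (i) is overtwisted; since it agrees with the Darboux germ near its boundary and is formally standard rel $\partial B$, replacing a Darboux ball in $(M,\xi)$ by $(B^{2n+1},\xi_0)$ leaves the almost contact class unchanged and keeps the result overtwisted. The uniqueness part of the \textit{h}-principle of \cite{boelmu} then gives that this modification is contact isotopic to $(M,\xi)$, so $(M,\xi)$ contains an embedded copy of $(B^{2n+1},\xi_0)$, hence a small plastikstufe with spherical core and trivial rotation.

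I expect the main obstacle to lie in the germ-matching in step (ii): one must bring the contact form to a normal form on a neighbourhood of the subcomplex $(D_\ot^2\times\{p\})\cup(\{q\}\times S^{n-1})$ of $\mathcal{P}$ and then extend this normal form over the whole product $D_\ot^2\times S^{n-1}$ while keeping it compatible with the model of \cite{boelmu}. The trivial-rotation hypothesis is exactly what is needed to kill the rotational ($SO$- or $U$-valued) monodromy that would otherwise obstruct this extension, and the spherical core is what makes the leaf ribbon cappable to an honest Legendrian disc, on which the $1$-jet normal form is available; loose-Legendrian flexibility, as used in \cite{camupr}, is then what lets the a priori ``thin'' plastikstufe be fattened so as to contain the full overtwisted disc.
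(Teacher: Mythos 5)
Your step (ii) --- producing a Borman--Eliashberg--Murphy overtwisted $2n$-disc ``by hand'' inside a normalized neighbourhood of the plastikstufe --- is where the entire difficulty of the theorem lives, and as written it is a gap rather than a proof. Two concrete problems. First, the dimension count of your proposed disc does not close: the non-compact leaf of the foliation on $D^2_\ot$ contributes one direction and the core $S^{n-1}$ contributes $n-1$, which assembles only the $n$-dimensional leaf ribbon (a Legendrian), not a $2n$-dimensional disc; you never say where the remaining $n$ directions, or the required contact germ along them, come from. Second, no direct identification of a BEM overtwisted disc inside a plastikstufe neighbourhood is available --- this is exactly why \cite{camupr} (and this paper, in the toric case) argue indirectly: one shows that the \emph{trivial Legendrian sphere} $\Lambda_0$, pushed off the plastikstufe, is \emph{loose}, and then invokes the criterion (Proposition~\ref{prop:ls2ot}) that a loose trivial Legendrian unknot forces overtwistedness. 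The looseness is manufactured by isotoping a standard punctured Legendrian disc inside $\Lambda_0$ into the standard tubular neighbourhood $\R^3\times T^\ast S^{n-1}$ of the plastikstufe (this is where smallness and trivial rotation enter, via an $h$-principle argument as in \cite{muniplst}, cf.\ Lemma~\ref{lem:key}) and then applying the $3$-dimensional destabilization trick of Theorem~\ref{thrm:slidingOTD} fibrewise over the core, which produces a genuine loose chart. Your remark that $\op{\Lambda}$ ``sits in a standard loose chart'' because the puncture caps off is also not correct: a standard Legendrian disc is not loose, and looseness is precisely what must be extracted from the overtwisted-disc factor of the plastikstufe, not from the rotation hypothesis alone.

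Your steps (i) and (iii) for the ``only if'' direction are essentially the paper's route (Corollaries~\ref{cor:isoctemb} and~\ref{cor:possiblePS}: embed the model plastikstufe germ into a Darboux ball of the overtwisted manifold by the $h$-principle, checking that the embedded leaf ribbon is the standard punctured Legendrian disc so that the rotation is trivial). Note, however, that as you have arranged it step (iii) quotes step (ii) to know that the modified ball is still overtwisted, so the gap in (ii) propagates; this particular dependence is avoidable (keep the modification disjoint from an overtwisted disc of $(M,\xi)$), but the ``if'' direction itself cannot be repaired without the loose-Legendrian machinery.
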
 %--------------------------------------------------------------------

  Then our interest goes to examples or constructions of such structures 
on a given manifold. 
 In dimension~$3$, there exists a famous modification of contact structure, 
the Lutz twist, that creates overtwisted discs 
without changing the given manifold. 
 A higher-dimensional generalization of the Lutz twist 
was introduced by Etnyre and Pancholi~\cite{etpa}, that creates a plastikstufe. 
 It is confirmed in~\cite{camupr} that the plastikstufe is small 
and has trivial rotation. 

  On the other hand, another higher-dimensional generalization 
of the Lutz twist is introduced 
by the author~\cite{art21}. 
 By the modification, we also obtain plastikstufes in the given manifold. 
 However the plastikstufes are with toric cores $B^{n-1}=T^{n-1}$. 
 This is the first motivation of this paper. 
 We introduce, in this paper, rotation class of small plastikstufes 
with toric core. 
 Then we prove the following: 
%
% Theorem A. -------------------------------------------------------------------
%
\begin{thm}\label{thma}
  Let $(M,\xi)$ be a contact manifold of dimension $2n+1$. 
 The contact structure $\xi$ is overtwisted if and only if 
there exists a small plastikstufe with a toric core $T^{n-1}$ 
that has trivial rotation.
\end{thm}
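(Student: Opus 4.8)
The plan is to reduce Theorem~\ref{thma} to the theorem of Casals, Murphy and Presas recalled above, by showing that the two kinds of structure — a small plastikstufe with spherical core and trivial rotation, and a small plastikstufe with toric core and trivial rotation — produce one another. The bridge is a \emph{standard neighbourhood theorem}: I would first prove that smallness together with trivial rotation determines the germ of $\xi$ along the plastikstufe, so that a sufficiently thin neighbourhood of a small plastikstufe with toric core $T^{n-1}$ and trivial rotation is contactomorphic to one fixed model ball $\mathcal{N}_{T^{n-1}}$, and likewise $\mathcal{N}_{S^{n-1}}$ in the spherical case. Here $\mathcal{N}_{T^{n-1}}$ is assembled from the $D^2_\ot$-factor, a neighbourhood of the Legendrian leaf ribbon (a standard loose Legendrian $\R\times T^{n-1}$, sitting in a $J^1$-type model), and the remaining normal directions; it coincides with the local picture created by the generalized Lutz twist of~\cite{art21} — which, exactly as in the spherical case of Etnyre--Pancholi treated in~\cite{camupr}, one checks produces a plastikstufe that is small and has trivial rotation. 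Once such a model is available, everything becomes a construction inside two explicit contact balls.

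For the ``if'' direction, assume $(M,\xi)$ contains a small plastikstufe with toric core and trivial rotation. Passing to a thin neighbourhood, we may work inside the ball $\mathcal{N}_{T^{n-1}}$, and it suffices to locate a small plastikstufe with spherical core and trivial rotation there; the Casals--Murphy--Presas theorem then yields that $\xi$ is overtwisted. To produce the spherical-core plastikstufe I would split $T^{n-1}=S^1\times T^{n-2}$ and analyse the interaction of the $D^2_\ot$-factor with a single $S^1$-factor — essentially the three-dimensional picture of a Lutz tube — inside which a simple overtwisted $2$-disc, and after reinserting the $T^{n-2}$- and the residual directions a spherical-core plastikstufe, becomes visible; trivial rotation is exactly what rules out a monodromy obstruction to this recombination and forces the resulting plastikstufe to again have trivial rotation. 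Equivalently, and perhaps more cheaply, one bypasses \cite{camupr} here and writes a piecewise-smooth overtwisted disc $\Delta_\ot$ in the sense of~\cite{boelmu} directly inside $\mathcal{N}_{T^{n-1}}$.

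For the ``only if'' direction, assume $\xi$ is overtwisted. By the Casals--Murphy--Presas theorem (its ``only if'' half, which rests on the \textit{h}-principle of~\cite{boelmu}) there is a small plastikstufe with spherical core $S^{n-1}$ and trivial rotation, and by the standard neighbourhood theorem a thin neighbourhood of it is contactomorphic to $\mathcal{N}_{S^{n-1}}$. It remains to exhibit a small plastikstufe with toric core and trivial rotation inside $\mathcal{N}_{S^{n-1}}$. Here the isotropic core $S^{n-1}$ sits as a zero section, and I would deform it to a torus through the standard sequence of surgeries $S^{n-1}\leadsto S^1\times S^{n-2}\leadsto\cdots\leadsto (S^1)^{n-1}=T^{n-1}$, each step attaching a $1$-handle to the core and correspondingly a handle to the plastikstufe, carried out inside the model while keeping the $D^2_\ot$-factor and the Legendrian leaf ribbon intact; this lands a toric-core plastikstufe in the same ambient ball — hence small — and one tracks the rotation invariant along the new homology classes to see it stays trivial. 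As before, this amounts to comparing $\mathcal{N}_{S^{n-1}}$ with $\mathcal{N}_{T^{n-1}}$.

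The main obstacle is precisely this crux comparison: passing between the spherical and the toric core inside a fixed contact ball while preserving both the plastikstufe structure and the triviality of the rotation. Two points need care: establishing the standard neighbourhood theorem from trivial rotation alone — one must verify that ``trivial rotation'' genuinely controls the contact germ, and not merely a homotopy-theoretic datum — and controlling the rotation class through the $1$-handle attachments, so that the ``trivial $\mapsto$ trivial'' implication holds in both directions. I expect the splitting $T^{n-1}=S^1\times T^{n-2}$ down to a Lutz-tube model, combined with the flexibility of the loose Legendrian leaf ribbons, to be the technical engine behind both of these.
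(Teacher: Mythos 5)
Your plan reduces everything to a ``standard neighbourhood theorem'' asserting that smallness plus trivial rotation determines the contact germ along the plastikstufe, giving a fixed model ball $\mathcal{N}_{T^{n-1}}$ (resp.\ $\mathcal{N}_{S^{n-1}}$), and then to a direct exchange of cores $S^{n-1}\leftrightarrow T^{n-1}$ inside such a ball. Both steps have genuine gaps. First, the neighbourhood statement conflates two different things. By the result of Murphy--Niederkr\"uger--Plamenevskaya--Stipsicz recalled in Subsection~\ref{sec:plastikstufe}, \emph{every} plastikstufe $\mathcal{P}_B$ has a standard tubular neighbourhood contactomorphic to a neighbourhood of $D^2_\ot\times B_0$ in $\bigl(\R^3\times T^\ast B,\ker(\alpha_\ot+\lambdast)\bigr)$ --- no smallness or rotation hypothesis is needed, and that neighbourhood is a thin collar of $D^2\times B$, not a ball. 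The rotation class, by contrast, is defined by comparing the leaf ribbon with a reference Legendrian ($\inter D^n\setminus\{0\}$, or $\tilde U$ in the toric case) sitting elsewhere in the ambient Darboux ball; it is a homotopy class of maps $\Lambda\to \mathrm{GL}(n,\C)$ and is not visible in the germ of $\xi$ along $\mathcal{P}$. So trivial rotation does not hand you a preferred contactomorphism type of a ball containing $\mathcal{P}$; turning that homotopy-theoretic datum into actual isotopies is exactly where the work lies (in the paper, via the Smale--Hirsch theorem and Gromov's \textit{h}-principle for subcritical isotropic embeddings in Lemma~\ref{lem:key}).

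Second, the core-exchange steps you defer as ``the main obstacle'' are the entire mathematical content, and neither is carried out in the paper nor obviously feasible. For the ``only if'' direction the paper does not surger $S^{n-1}$ into $T^{n-1}$ at all: it invokes Corollary~\ref{cor:possiblePS} of Borman--Eliashberg--Murphy, which embeds the model plastikstufe with core $B$ into any overtwisted manifold as soon as $TB\otimes\C$ is trivial --- immediate for $B=T^{n-1}$ --- and then checks trivial rotation by tracking the explicit map $T^\ast S^1\to\C\setminus\{0\}$ through the \textit{h}-principle. Your proposed sequence of isotropic $1$-handle attachments preserving the foliated product structure $D^2_\ot\times(\text{core})$ and the rotation class is unsubstantiated (and $T^{n-1}$ is not reached from $S^{n-1}$ by the simple chain you write for $n>3$). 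For the ``if'' direction the paper never produces a spherical-core plastikstufe: it proves Theorem~\ref{thrm:pstc2loose}, that every Legendrian disjoint from the toric plastikstufe is loose, by isotoping a standard $(0,1)\times T^{n-1}$ piece of the Legendrian into the standard neighbourhood (Lemma~\ref{lem:key}, which is where trivial rotation is used) and then applying the $3$-dimensional destabilization Theorem~\ref{thrm:slidingOTD} fiberwise over $T^{n-1}$ to exhibit a loose chart; overtwistedness then follows from Proposition~\ref{prop:ls2ot} applied to the trivial Legendrian sphere. If you want to salvage your route, you would need to supply a proof of the core-exchange inside a fixed ball with rotation control --- which, as far as this paper is concerned, is an open substitute for its actual argument rather than a simplification of it.
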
 %---------------------------------------------------------------------
\noindent
 We should remark that it is proved in~\cite{art21} 
that the modification creates overtwisted discs directly. 

  This paper is organized as follows. 
 In the next section, we review the important notions: 
plastikstufe, loose Legendrian submanifold, and overtwistedness. 
 In Section~\ref{sec:rotPS}, we discuss on rotation class of plastikstufes 
with toric core. 
 Then Theorem~\ref{thma} is proved in Section~\ref{sec:proof}. 
 Last of all, in Section~\ref{sec:gltw}, we discuss how to create plastikstufes 
with toric core and trivial rotation without changing the underlying manifold. 

\smallskip

  Recently, a stronger result is informed by Huang~\cite{huang}. 
 It is claimed that any embedded plastikstufe implies overtwistedness. 
 However, it seems it is still important 
to know concrete shapes of plastikstufes. 

%%% Section 2. %%%%%%%%%%%%%%%%%%%%%%%%%%%%%%%%%%%%%%%%%%%%%%%%%%%%%%%%%%%%%%%%%
\section{Preliminaries}\label{sec:prelim}
%%%%%%%%%%%%%%%%%%%%%%%%%%%%%%%%%%%%%%%%%%%%%%%%%%%%%%%%%%%%%%%%%%%%%%%%%%%%%%%%
  In this section, we review some notions and properties 
needed in the following sections. 
 Plastikstufe (Subsection~\ref{sec:plastikstufe}), 
loose Legendrian submanifold (Subsection~\ref{sec:looselegendrian}), 
and overtwistedness (Subsection~\ref{sec:overtwisted}) are introduced. 
 In Subsection~\ref{sec:h-princ}, we review some results 
concerning \textit{h}-principle. 

%--- 2.1 -----------------------------------------------------------------------
\subsection{Plastikstufe}\label{sec:plastikstufe}
  Plastikstufe is an obstruction to symplectic fillability 
introduced by Niederkr\"uger~\cite{niederkruger06}. 
 A prototype of this notion was introduced by Gromov~\cite{gromov85}. 
 In this subsection, we review the definition and basic properties. 

  In order to define Plastikstufe, 
we first introduce an overtwisted disc in a $3$-dimensional contact manifold. 
Let $(M,\xi)$ be a $3$-dimensional contact manifold, 
and $D\subset(M,\xi)$ an embedded disc. 
 The contact structure $\xi$ trace a singular $1$-dimensional foliation 
$D_\xi$ on $D$ called the \emph{characteristic foliation}. 
 The disc $D$ is called an \emph{overtwisted disc}\/
if the characteristic foliation is isomorphic to Figure~\ref{fig:3otdiscs}(1). 
%
% Figure. ----------------------------------------------------------------------
%
\begin{figure}[htb]
  \centering
  \begin{small}
    \includegraphics[height=3.6cm]{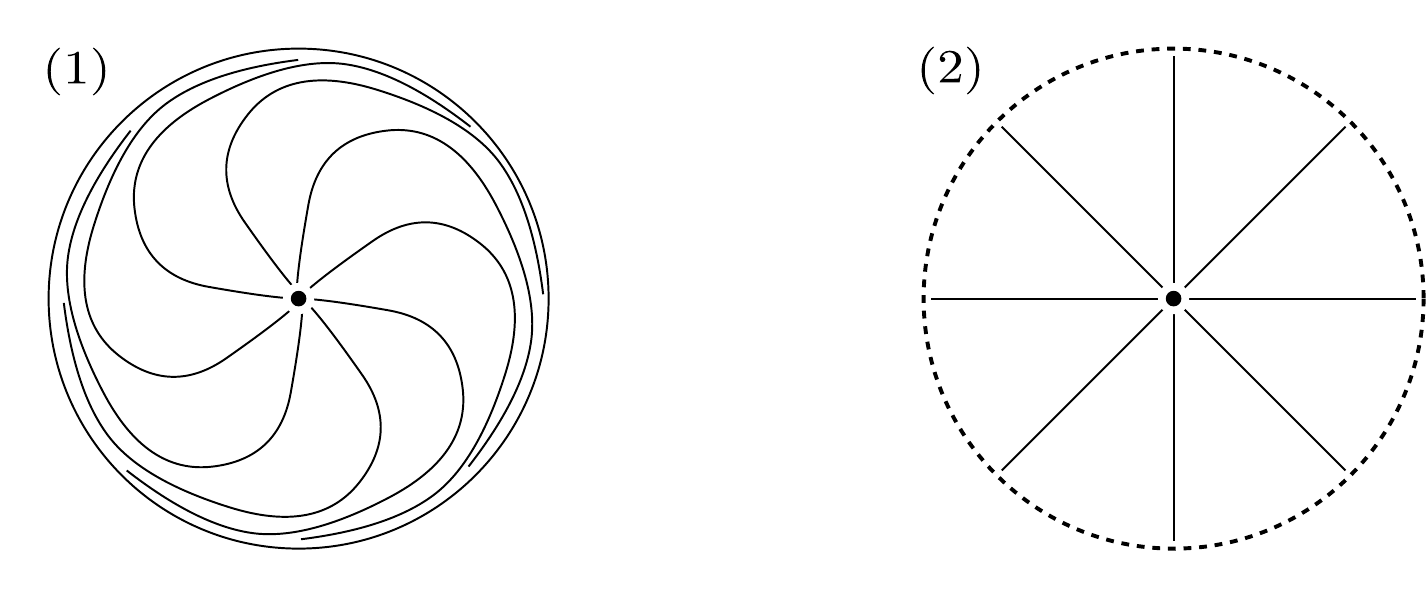}
  \end{small}
  \caption{overtwisted disc in dimension~$3$.}
  \label{fig:3otdiscs}
\end{figure} %------------------------------------------------------------------
 The boundary $\rd D$ is a Legendrian circle and the center is a singular point.
 It is obtained from a non-generic disc in Figure~\ref{fig:3otdiscs}(2) 
by perturbing slightly. 
 In the second disc, the boundary and the center is the set of singular points. 

  Then the plastikstufe is defined as follows. 
 Let $(M,\xi)$ be a contact manifold of dimension~$2n+1>3$, 
and $B$ a closed manifold of dimension~$n-1$. 
%
% Definition. ------------------------------------------------------------------
%
\begin{dfn}
  A \emph{plastikstufe\/} with \emph{core\/} $B$ is a submanifold 
$\mathcal{P}_B\subset(M,\xi)$ diffeomorphic to $D^2\times B$ 
which satisfies the following conditions: 
\begin{itemize}
\setlength{\parskip}{0cm} 
\setlength{\itemsep}{0cm}
  \item each fiber $\{z\}\times B$ is tangent to $\xi$ for any $z\in D^2$, 
  \item on each slice $D^2\times\{b\}$, 
    $\xi\cap T(D^2\times\{b\})$ generates the same singular foliation 
    as the overtwisted disc (see Figure~\ref{fig:3otdiscs}(1)) for any $b\in B$.
\end{itemize}
 A plastikstufe is said to be \emph{small\/} 
if it is contained in an embedded open ball in $(M,\xi)$. \\
 The submanifold in a plastikstufe $\mathcal{P}_B\subset(M,\xi)$ 
corresponding to $(0,1)\times B\subset D^2\times B$ is a Legendrian submanifold,
where $(0,1)$ is a leaf of the characteristic foliation on the overtwisted disc 
(see Figure~\ref{fig:3otdiscs}). 
 A thin ribbon corresponding to $(0,\epsilon)\times B\subset(0,1)\times B$ 
sufficiently close to the core $B$ is called a \emph{leaf ribbon}\/ 
of the plastikstufe $\mathcal{P}_B$. 
 All leaf ribbons are isotopic as Legendrian submanifolds. \\
 A contact structure $\xi$ is said to be \emph{PS-overtwisted\/} 
if there exists a plastikstufe in $(M,\xi)$. 
\end{dfn} %---------------------------------------------------------------------

  An important property of plastikstufe is the following 
due to Niederkr\"uger~\cite{niederkruger06}: 
%
% Theorem. ---------------------------------------------------------------------
%
\begin{thrm}
  If a closed contact manifold has a plastikstufe, 
then it can not have any (semi-positive) symplectic filling. 
\end{thrm}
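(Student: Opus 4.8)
The plan is to argue by contradiction using a filling-by-holomorphic-discs argument of Gromov--Eliashberg type, in the form adapted to plastikstufes by Niederkr\"uger. Suppose that the closed contact manifold $(M,\xi)$ carries a plastikstufe $\mathcal{P}_B\cong D^2\times B$ and, contrary to the claim, admits a semi-positive symplectic filling $(W,\omega)$ with $\partial W=M$. First I would complete $(W,\omega)$ by gluing the positive symplectization end $[0,\infty)\times M$ along the boundary to obtain $(\hat W,\hat\omega)$, and then choose a compatible almost complex structure $J$ that is cylindrical and adapted to $\xi$ on the end. Since $\mathcal{P}_B$ has dimension $n+1=\tfrac12\dim\hat W$, the structure $J$ can be arranged so that $\mathcal{P}_B$ is totally real; the essential geometric input is that, on each slice $D^2\times\{b\}$, the overtwisted characteristic foliation appears as the trace of boundaries of $J$-holomorphic discs with boundary on $\mathcal{P}_B$.

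The core of the proof is the construction of a Bishop family of $J$-holomorphic discs. Each slice $D^2\times\{b\}$ has an elliptic singular point of its characteristic foliation, and near such a point a standard local model produces a one-parameter family of small $J$-holomorphic discs with boundary on $\mathcal{P}_B$ that shrink to the singular point at one end. I would consider the moduli space $\mathcal{M}$ of such discs and compute its expected dimension from the Maslov index of the boundary condition, arranged so that the Bishop family constitutes one distinguished boundary stratum of $\mathcal{M}$, namely the locus where a disc collapses onto an elliptic point. Because the fibers $\{z\}\times B$ are tangent to $\xi$, one checks that the boundary of each holomorphic disc is confined to a single slice, so the analysis reduces to a $B$-parametrized version of the three-dimensional overtwisted-disc argument.

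Next I would establish compactness of the connected component of $\mathcal{M}$ issuing from the Bishop family. A uniform energy bound holds because the symplectic area $\int_{D^2}u^*\hat\omega$ is locally constant on $\mathcal{M}$, depending only on the relative homotopy class of the disc. Gromov compactness then applies, and here the semi-positivity hypothesis is decisive: it controls the Chern numbers of interior sphere bubbles and the Maslov indices of boundary bubbles so that, in the relevant generic dimension, no bubbling occurs and $\mathcal{M}$ compactifies to a manifold with boundary. The maximum principle, together with the cylindrical adaptation of $J$ near the convex boundary, confines every disc to $W$, so none can escape through $M$.

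The contradiction then comes from the global structure of the moduli space. After passing to a one-dimensional slice of $\mathcal{M}$ (fixing the core parameter and imposing a point constraint as needed), the compactified moduli space is a compact one-manifold. Its boundary on one side consists of a single collapsed Bishop disc at an elliptic singularity; I would then show that no second boundary point can occur, since the outer boundary circle $\partial D^2\times\{b\}$ is Legendrian and the non-compact leaf structure of the overtwisted slice prevents the disc boundaries from closing up there. A compact one-manifold with exactly one boundary point cannot exist, which gives the desired contradiction. The main obstacle is precisely this last analytic step: excluding degeneration of the disc family at the Legendrian boundary and ruling out bubbling through semi-positivity, so that the moduli space genuinely behaves as a compact one-manifold with a single boundary point.
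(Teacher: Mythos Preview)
The paper does not give its own proof of this theorem. It is stated in Subsection~\ref{sec:plastikstufe} as a result \emph{due to Niederkr\"uger}~\cite{niederkruger06} and is simply quoted without argument; the paper uses it only as background motivation for the notion of plastikstufe, not as an ingredient in the proof of Theorem~\ref{thma}. So there is no ``paper's own proof'' to compare against.

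That said, your sketch is a faithful outline of Niederkr\"uger's original argument in~\cite{niederkruger06}: complete the filling, choose an adapted $J$ making $\mathcal{P}_B$ totally real, launch a Bishop family from the elliptic singular locus $\{0\}\times B$, control bubbling via semi-positivity, and derive a contradiction from the structure of the resulting moduli space. A couple of points are worth tightening. First, the assertion that ``the boundary of each holomorphic disc is confined to a single slice $D^2\times\{b\}$'' is not quite how the argument runs; rather, Niederkr\"uger shows the disc boundaries are transverse to the characteristic foliation of $\mathcal{P}_B$ and hence cannot reach the Legendrian boundary $\partial D^2\times B$, without literally reducing to a $B$-parametrized $3$-dimensional problem. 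Second, the energy bound is obtained not by local constancy of $\int u^\ast\hat\omega$ on $\mathcal{M}$ in general, but from an explicit primitive of $\omega$ near $\mathcal{P}_B$ that vanishes along $\mathcal{P}_B$, giving a uniform topological bound via Stokes. These are refinements rather than gaps; the overall strategy you describe is the correct one.
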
 %--------------------------------------------------------------------

¡¡  A plastikstufe has a standard neighborhood 
although it has codimension $n>1$. 
¡¡Let $\mathcal{P}_B$ be a plastikstufe with core $B$ 
in a contact manifold $(M,\xi)$ of dimension~$2n+1$. 
 It is proved that there exists the standard tubular neighborhood 
of $\mathcal{P}_B\subset(M,\xi)$ described as follows (see \cite{muniplst}). 
 Setting $\alpha_\ot:=\cos rdz+\sin rd\theta$, 
we have the standard overtwisted contact form $\alpha_\ot$ 
on $\R^3$ with the cylindrical coordinates $(z,r,\theta)$. 
 Let $D^2_\ot\subset(\R^3,\ker\alpha_\ot)$ denote 
an overtwisted disc. 
 Setting $\alpha_\textup{PS}:=\alpha_\ot+\lambda_\textup{can}$, 
where $\lambda_\textup{can}=\mathbf{q}d\mathbf{p}$ 
is the canonical Liouville $1$-form on $T^\ast B$, 
we have a contact form on $\R^3\times T^\ast B$. 
 Let $\xi_\textup{PS}=\ker\alpha_\textup{PS}$ 
denote the corresponding contact structure. 
 Then there exists a tubular neighborhood $U_\textup{PS}\subset(M,\xi)$ 
of $\mathcal{P}_B$ and a contact embedding 
$\vphi_\textup{PS}\colon(U_\textup{PS},\xi)\to(\R^3\times T^\ast B,\xi_\textup{PS})$
that maps $\mathcal{P}_B$ 
to $\vphi_\textup{PS}(\mathcal{P}_B)=D^2_\ot\times B_0$, 
where $B_0$ is the zero-section of $T^\ast B$. 

%--- 2.2 -----------------------------------------------------------------------
\subsection{Loose Legendrian submanifold} \label{sec:looselegendrian}
  Loose Legendrian submanifold is a special Legendrian submanifolds 
in a contact manifold of dimension greater than $3$. 
 It is introduced by Murphy~\cite{murphy12} 
as a class where a parametric \textit{h}-principle holds. 
 In this subsection, we review the definition and basic properties. 

  First, we define loose Legendrian submanifold. 
% For the definition, we introduce some basic notions and notations. 
 It is defined by using model chart. 
 We introduce some parts of the model. 
 Let $\xist$ be the standard contact structure on $\R^3$, 
and $\alphast:=dz-ydx$ the standard contact form defining $\xist$. 
 In $(\R^3,\xist)$, let $L_0$ be a negatively stabilized Legendrian curve 
as in Figure~\ref{fig:negstab}(4). 
%
% Figure. ----------------------------------------------------------------------
%
\begin{figure}[htb]
  \centering
  \begin{small}
    \includegraphics[height=2.1cm]{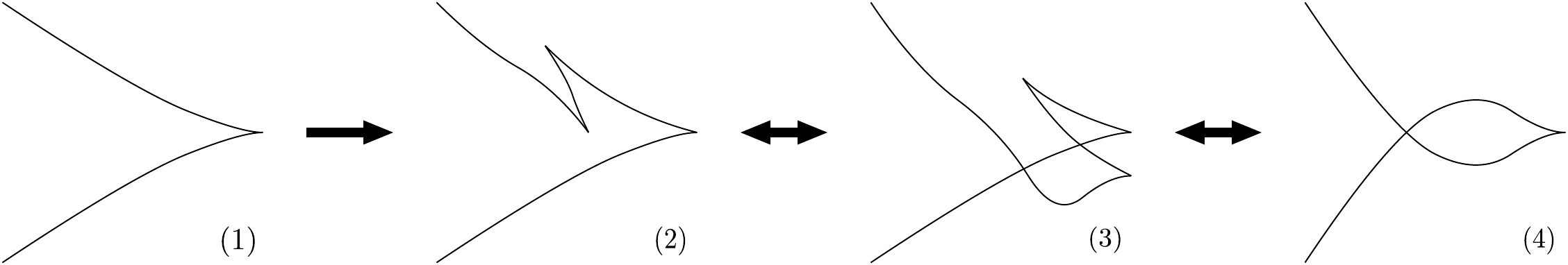}
  \end{small}
  \caption{Stabilization of a Legendrian arc (front projection).}
  \label{fig:negstab}
\end{figure} %------------------------------------------------------------------
 More precisely, it is a part of the curve 
\begin{equation*}
  \lft(t^2,\frac{15}{4}(t^3-t),\frac{3}{2}t^5-\frac{5}{2}t^3\rgt) 
\end{equation*}
including the cusp and the crossing point in Figure~\ref{fig:negstab}(4). 
 Let $W\subset(\R^3,\xist)$ a convex open ball that contains $L_0$. 
 Next, we introduce some symplectic parts. 
 Let $\lambdast=\sum q_idp_i$ be the standard Liouville form 
on $T^\ast\R^{n-1}$ and $Z\subset T^\ast\R^{n-1}$ the Lagrangian zero section. 
 Set 
\begin{equation*}
  V_\rho:=\lft\{(p_1,\dots,p_{n-1},q_1,\dots,q_{n-1})\in T^\ast\R^{n-1}\mid 
  p_1^2+\dots+p_{n-1}^2<\rho^2,\ q_1^2+\dots+q_{n-1}^2<\rho^2\rgt\}. 
\end{equation*}
Then the $1$-form $\alphast+\lambdast$ is a contact form 
on $\R^3\times T^\ast\R^{n-1}\cong\R^{2n+1}$, 
which is nothing but the standard contact structure. 
 We abuse the notation $\xist$ for $\ker(\alphast+\lambdast)$ as well. 
 The submanifold $L_0\times Z\subset(\R^{2n+1},\xist)$ 
is a Legendrian submanifold. 

  Now, loose Legendrian submanifold is defined as follows. 
 It is defined by Murphy~\cite{murphy12}. 
 The following definition is due to~\cite{muniplst}, 
which is equivalent to the original one. 
%
% Definition. ------------------------------------------------------------------
%
\begin{dfn}
  The relative pair $(W\times V_\rho,L_0\times Z)$ 
of an open set and a Legendrian submanifold in $(\R^{2n+1},\xist)$ 
for some convex open set $W\subset\R^3$ containing $L_0$ 
is called a \emph{loose chart}\/ if $\rho>1$. 

  Let $(M,\xi)$ be a contact manifold of dimension~$2n+1>3$. 
  A connected Legendrian submanifold $\Lambda\subset(M,\xi)$ 
is said to be \emph{loose\/} 
if there exists an open set $U\subset M$ so that $(U,U\cap\Lambda)$ 
is contactomorphic to a loose chart. 
\end{dfn} %---------------------------------------------------------------------

  The notion, loose Legendrian, is introduced as a class 
that satisfies the parametric \textit{h}-principle. 
 The following theorem is proved by Murphy~\cite{murphy12}. 
%
% Theorem. ---------------------------------------------------------------------
%
\begin{thrm}
  Let $(M,\xi)$ be a contact manifold of dimension~$2n+1>3$. 
 If loose Legendrian submanifolds $\Lambda_0,\ \Lambda_1\subset(M,\xi)$ 
are isotopic as embeddings, 
then they are isotopic as Legendrian embeddings. 
\end{thrm}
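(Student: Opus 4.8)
The plan is to prove this as an $h$-principle by the method of wrinkled Legendrian embeddings, with the looseness hypothesis entering precisely at the desingularization stage. Write $\Lambda$ for the abstract manifold underlying both embeddings.

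\smallskip
\noindent\textbf{Reduction to a formal Legendrian isotopy.}
First I would pass from the genuine problem to its formal counterpart. A \emph{formal Legendrian isotopy} from $\Lambda_0$ to $\Lambda_1$ is a smooth isotopy $f_t\colon\Lambda\to M$ together with a homotopy $F_{s,t}\colon T\Lambda\to\xi$ of injective bundle maps covering $df_t$ whose images are isotropic for the conformal symplectic form $d\alpha|_\xi$, genuine ($F=df$) at $t=0,1$ and merely formal in between. The given smooth isotopy provides the path $f_t$, so what remains is to connect the tangential data; over $\Lambda\times[0,1]$ this is a section problem for a bundle with connected (Lagrangian--Grassmannian) fibre, and in the range $2n+1>3$ the relevant obstructions can be removed, so the smooth isotopy upgrades to a formal Legendrian isotopy. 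It therefore suffices to start from a formal Legendrian isotopy and make it genuine; indeed the argument establishes the stronger statement that formally Legendrian isotopic loose Legendrians are Legendrian isotopic.

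\smallskip
\noindent\textbf{Wrinkling.}
Next I would approximate the formal Legendrian isotopy by a \emph{wrinkled Legendrian isotopy}. By the holonomic approximation theorem of Eliashberg--Mishachev together with the wrinkling construction for Legendrian fronts, the formal family $F_{s,t}$ can be $C^0$-approximated by an honest family $g_t\colon\Lambda\to M$ that is genuinely tangent to $\xi$ off a controlled singular locus, where it carries standard \emph{Legendrian wrinkles}: localized front singularities modelled on a fixed chart, each an embryo of cusp edges enclosing a zig-zag. Away from the wrinkles the family is Legendrian, its formal data stays homotopic to the prescribed one, and the ends $g_0,g_1$ are the genuine $\Lambda_0,\Lambda_1$.

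\smallskip
\noindent\textbf{Desingularization via looseness.}
The heart of the proof is to remove these wrinkles by a genuine Legendrian isotopy, and this is exactly what a loose chart provides. Up to the standard model, the front of a wrinkle is the same zig-zag that a stabilization inserts, and the loose chart $(W\times V_\rho,\,L_0\times Z)$ with $\rho>1$ and $L_0$ the negatively stabilized arc is precisely a region in which such a zig-zag can be created and then cancelled by a compactly supported Legendrian isotopy, the inequality $\rho>1$ supplying the room. I would therefore use the connectedness of $\Lambda$ and an ambient contact isotopy to route every wrinkle along $\Lambda$ into one fixed loose chart that persists along the whole family, and cancel the collected wrinkles there by the local model isotopy. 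Concatenating the wrinkle-creation with this wrinkle-removal yields a genuine Legendrian isotopy from $\Lambda_0$ to $\Lambda_1$.

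\smallskip
\noindent\textbf{Main obstacle.}
The hard step is the desingularization: making the local wrinkle cancellation compatible with the contact form and carrying out the global bookkeeping that funnels all wrinkles into a single loose chart without disturbing the already-genuine part of the family. The entire construction --- holonomic approximation, wrinkling, and local cancellation --- is arranged to proceed in families over a parameter ball, so the same argument yields the parametric $h$-principle, of which the statement here is the zero-parameter case.
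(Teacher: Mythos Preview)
The paper does not give a proof of this statement; it is quoted as a theorem of Murphy~\cite{murphy12} and used as a black box. Your outline of the wrinkling/desingularization mechanism is indeed the strategy of Murphy's original argument, so in that sense your proposal is on target.

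There is, however, a genuine gap in your first reduction step. You assert that a smooth isotopy between two Legendrian embeddings can always be upgraded to a formal Legendrian isotopy because the fibre (the Lagrangian Grassmannian $U(n)/O(n)$) is connected and ``in the range $2n+1>3$ the relevant obstructions can be removed.'' Connectedness kills only the primary obstruction. Since $\pi_1\bigl(U(n)/O(n)\bigr)\cong\mathbb{Z}$, the next obstruction to extending the Lagrangian tangential data over $\Lambda\times[0,1]$ relative to $\Lambda\times\{0,1\}$ lives in $H^2\bigl(\Lambda\times[0,1],\Lambda\times\{0,1\};\mathbb{Z}\bigr)\cong H^1(\Lambda;\mathbb{Z})$, and this need not vanish. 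That obstruction is exactly (a relative form of) the rotation class the present paper is built around: two Legendrian embeddings of the same manifold can be smoothly isotopic yet have different rotation data, hence fail to be formally Legendrian isotopic, and then looseness cannot repair the situation. Murphy's actual theorem assumes a \emph{formal} Legendrian isotopy, not merely a smooth one; the phrase ``isotopic as embeddings'' in the paper is an imprecise paraphrase. Your wrinkling-and-cancellation sketch is a reasonable summary of Murphy's proof of the correct statement, but the reduction you propose at the outset is not valid as written.
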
 %--------------------------------------------------------------------

  Some relations between loose Legendrian submanifolds and plastikstufes 
are studied in~\cite{muniplst}. 
 In order to state the result, we need some other notions. 
 We mention the relation in Section~\ref{sec:rotPS}. 
 A key observation for the results, as well as for results in this paper, 
is such a relation in dimension~$3$. 
 A relation between negative stabilization of a Legendrian knot 
and an overtwisted disc 
in a contact $3$-manifold is discussed also in~\cite{muniplst}. 
%
% Theorem 2.3. -----------------------------------------------------------------
%
\begin{thrm}\label{thrm:slidingOTD}
  Let $(M,\xi)$ be an overtwisted contact $3$-manifold 
with an overtwisted disc $D_\ot^2$. 
 Suppose that $L\subset(M,\xi)$ is a Legendrian knot 
which never intersects with the overtwisted disc $D_\ot^2$. 
 Then the Legendrian knot $\tilde{L}:=L\#\rd D_\ot^2$ 
obtained as a Legendrian connected sum 
of $L$ and the boundary $\rd D_\ot^2$ of the overtwisted disc 
is a negative destabilization of $L$. 
 Further, $L$ and $\tilde{L}$ are isotopic as Legendrian knots in $(M,\xi)$. 
\end{thrm}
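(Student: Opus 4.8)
The plan is to perform the Legendrian connected sum inside a controlled local model --- a Darboux ball containing the overtwisted disc together with a short sub-arc of $L$ --- and then to read both assertions off the front projection.

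First I would localize. Since $L\cap D_\ot^2=\emptyset$, choose a ball $B\subset(M,\xi)$ with $D_\ot^2\cup\ell\subset B$ for some short Legendrian sub-arc $\ell\subset L$, and with $\overline B$ disjoint from $L\setminus\ell$. Identifying $B$ with an open set in $(\R^3,\xist)$ by a contact chart, arrange that $\ell$ is a horizontal Legendrian strand and that $\rd D_\ot^2$ is a small Legendrian unknot placed alongside $\ell$; then $D_\ot^2$ is an embedded disc bounded by this unknot whose characteristic foliation is the standard overtwisted one of Figure~\ref{fig:3otdiscs}(1), so in particular $\rd D_\ot^2$ has Thurston--Bennequin invariant $0$. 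I realize $\tilde L=L\#\rd D_\ot^2$ by a Legendrian band inside $B$ joining $\ell$ to $\rd D_\ot^2$; by construction $\tilde L$ agrees with $L$ outside $B$.

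Next I would show that $\tilde L$ is a negative destabilization of $L$. In the front projection over $B$, banding $\ell$ onto $\rd D_\ot^2$ and then shrinking the complementary sub-arc of $\rd D_\ot^2$ --- an unknotted, unlinked Legendrian arc --- back toward the band inserts into the front of $L$ a single pair of cusps; the shape of the characteristic foliation of $D_\ot^2$ forces these cusps to appear as a \emph{negative} zigzag, and the Legendrian connected sum formula gives $tb(\tilde L)=tb(L)+1$. Hence the negative stabilization of $\tilde L$ is Legendrian isotopic to $L$, i.e.\ $\tilde L$ is a negative destabilization of $L$. Equivalently, this reflects the elementary fact that the negative stabilization of the boundary of an overtwisted disc is a standard Legendrian unknot, so that $L$ connect-summed with it is Legendrian isotopic to $L$.

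Finally I would prove that $L$ and $\tilde L$ are Legendrian isotopic. Smoothly this is immediate, $\rd D_\ot^2$ being an unknot bounding a disc disjoint from $L$. For the Legendrian statement I would use the disc $D_\ot^2$ itself: the long Legendrian arc of $\tilde L$ coming from $\rd D_\ot^2$, together with the band, can be swept across $D_\ot^2$ by a Legendrian isotopy supported in $B$; as the sweeping finger passes the unique interior singularity of the characteristic foliation, the zigzag produced above is cancelled and the strand returns to $\ell$, yielding a Legendrian isotopy from $\tilde L$ back to $L$. The delicate point, and the main obstacle, is precisely this last move: the overtwisted disc is not transverse to $\xi$, so ``sliding a Legendrian strand across $D_\ot^2$'' must be carried out in an explicit normal-form model of the characteristic foliation near its single interior singular point. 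This is exactly where the precise shape of an overtwisted disc --- one interior singularity and a limit-cycle boundary --- is used, and it is also the step that has no literal analogue in higher dimensions.
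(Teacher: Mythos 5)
First, a point of comparison: the paper gives no proof of Theorem~\ref{thrm:slidingOTD} at all --- it is quoted from \cite{muniplst} as a known result --- so your argument can only be judged on its own terms. Your localization step and the invariant count are the right start: $\operatorname{tb}(\rd D_\ot^2)=0$, so the connected-sum formula gives $\operatorname{tb}(\tilde L)=\operatorname{tb}(L)+1$. But from there your front-projection description is internally inconsistent: you say the band sum ``inserts a single negative zigzag into the front of $L$,'' which would make $\tilde L=S_-(L)$ and \emph{lower} $\operatorname{tb}$ by one, contradicting the computation you just made. The correct mechanism runs the other way: one must show that $S_-(\tilde L)$ is Legendrian isotopic to $L$, and since $S_-(L\#\rd D_\ot^2)=L\#S_-(\rd D_\ot^2)$, everything reduces to the lemma that $S_-(\rd D_\ot^2)$, oriented so that $\operatorname{rot}(\rd D_\ot^2)=+1$, is the standard $\operatorname{tb}=-1$, $\operatorname{rot}=0$ Legendrian unknot inside a neighbourhood of $D_\ot^2$, whence the connected sum is the trivial operation. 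You call this ``the elementary fact,'' but it is the entire content of the theorem and cannot be read off from the invariants: in an overtwisted manifold, equal classical invariants do not imply Legendrian isotopy (exceptional knots exist). One has to produce the isotopy, e.g.\ by exhibiting the disc bounded by $S_-(\rd D_\ot^2)$ and applying the elimination lemma to cancel the limit cycle against the interior singularity of the characteristic foliation; that is precisely where the shape of $D_\ot^2$ enters, and it is absent from your sketch.

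Second, your argument for the final clause cannot be completed. A Legendrian isotopy preserves $\operatorname{tb}$, and by your own computation $\operatorname{tb}(\tilde L)-\operatorname{tb}(L)=1$; hence no Legendrian isotopy from $\tilde L$ to $L$ can exist once the framing formula is accepted, and the ``sweep across $D_\ot^2$ past the interior singularity'' that you describe is exactly a destabilization move, not an isotopy. Rather than proposing such a sweep, you should have flagged that, under the standard conventions, the two clauses of the statement are mutually incompatible as literally written, and that only the destabilization clause is what is actually invoked later in the paper (in the proof of Theorem~\ref{thrm:pstc2loose}).
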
 %--------------------------------------------------------------------
\noindent
 It is a key idea in the discussion in Subsection~\ref{sec:PS2OT} 
for the proof of Theorem~\ref{thma}. 

%--- 2.3 -----------------------------------------------------------------------
\subsection{Overtwistedness}\label{sec:overtwisted}
  The notion, overtwistedness of a contact structure, 
implies a class of contact structures 
where a parametric \textit{h}-principle holds. 
 For higher dimensions, 
it is introduced by Borman, Eliashberg, and Murphy~\cite{boelmu}. 
 They defined the overtwisted disc in any dimension, 
and proved that the class of contact structures which have the overtwisted discs
satisfies the \textit{h}-principle (Theorem~\ref{thrm:existh-prnc}). 
 However, as mentioned in Introduction, the definition of the overtwisted disc 
is complicated. 
 One of the purposes of this paper is to find another characterization 
than the overtwisted disc. 
 Then we omit the definition of overtwisted disc in this paper. 
 We assume the existence of the class of contact structures that satisfies 
Theorem~\ref{thrm:existh-prnc}. 
 The key tool to prove the overtwistedness is Proposition~\ref{prop:ls2ot} 
below due to Casals, Murphy, and Presas~\cite{camupr}, 
that gives a sufficient condition for the overtwistedness 
in relation to the loose Legendrian submanifolds.

  The most important properties of the class, overtwisted contact structures, 
is that it satisfies the \textit{h}-principle. 
 It is proved by Borman, Eliashberg, and Murphy~\cite{boelmu}. 
 In terms of \textit{h}-principle, 
the formal counterpart for contact structure is almost contact structure. 
 Let $M$ be a $(2n+1)$-dimensional manifold and $A\subset M$ a subset 
which satisfies that $M\setminus A$ is connected. 
 Let $\xi$ be an almost contact structure on $M$ 
which is a genuine contact structure 
on an open neighborhood $\op A$ of $A\subset M$. 
 Then let $\contot{M}{A}{\xi}$ denote the set of contact structures on $M$ 
which are overtwisted on $M\setminus A$ and coincide with $\xi$ on $\op A$, 
and $\cont MA\xi$ the set of almost contact structures on $M$ 
which coincide with $\xi$ on $\op A$. 
 Further, for an embedding $\phi\colon D_{\ot}\to M\setminus A$, 
we introduce the following subsets of the sets above, 
where $D_\ot^2$ is the overtwisted disc with a germ of contact structure. 
 Let $\contot MA{\xi,\phi}\subset\contot MA\xi$ 
and $\mathfrak{cont}_\ot(M;A,\xi,\phi)\subset\cont MA\xi$ 
denote subsets 
consist of contact and almost contact structures 
for each of which $\phi$ is a contact embedding. 
 Then the following is one of the most important theorems in~\cite{boelmu}. 
%
% Theorem 2.4. -----------------------------------------------------------------
%
\begin{thrm}[Borman, Eliashberg, Murphy~\cite{boelmu}]\label{thrm:existh-prnc}
  Let $j\colon\contot MA\xi\to\cont MA\xi$ be the inclusion mapping. 
 Then the induced mapping 
\begin{equation*}
  j_\ast\colon\pi_0(\contot MA\xi)\to\pi_0(\cont MA\xi)
\end{equation*}
is isomorphic. 
 Moreover, the restriction 
\begin{equation*}
  j|_{\contot MA{\xi,\phi}}\colon\contot MA{\xi,\phi}\to
  \mathfrak{cont}_\ot(M;A,\xi,\phi)
\end{equation*}
is weak homotopy equivalent. 
\end{thrm}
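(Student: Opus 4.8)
The plan is to follow the standard architecture of an h-principle: reduce the global parametric statement to a local extension problem, solve the open part by classical flexibility, and concentrate all rigidity into a single local lemma that the overtwisted disc is designed to defeat. The two displayed assertions are both instances of one \emph{relative parametric existence} statement, with or without the auxiliary constraint that the fixed overtwisted disc $\phi$ be a contact embedding: for every $k$ and every map of pairs $(D^k,\rd D^k)\to(\cont MA\xi,\contot MA\xi)$ (respectively into the $\phi$-restricted pair), the family of almost contact structures can be deformed, rel $\rd D^k$ and rel $\op A$, into a family of genuine contact structures overtwisted on $M\setminus A$ (respectively also keeping $\phi$ a contact embedding). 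The weak homotopy equivalence of $j|_{\contot MA{\xi,\phi}}$ is exactly solvability of this problem for all $k$ with the $\phi$-constraint, while bijectivity of $j_\ast$ on $\pi_0$ is the cases $k=0$ (surjectivity) and $k=1$ (injectivity) without it. I would first record this reformulation and thereafter work only with the extension problem for a fixed compact family of parameters.

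Second, I would localize. Absorbing the parameter disc into the manifold, the task becomes deforming an almost contact structure on $M\times D^k$ (genuine near $(A\times D^k)\cup(M\times\rd D^k)$ and near the graph of $\phi$) to a genuine one. Fixing a triangulation of $M\times D^k$ compatible with $A$ and with $\phi(D_\ot^2)$, I would extend the genuine contact structure skeleton by skeleton. Over the top-dimensional open cells the obstruction vanishes by Gromov's h-principle for contact structures on \emph{open} manifolds; the entire difficulty is thus concentrated in extending across the balls dual to the cells, i.e.\ in filling \emph{contact shells}: genuine contact germs on a neighborhood of a sphere $S^{2n}$ that are required to bound a prescribed almost contact structure on the enclosed ball. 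The homotopy-theoretic obstruction to such a filling lives in homotopy groups of the space of contact germs on the sphere and is in general non-trivial, which is exactly why the honest h-principle fails without overtwistedness.

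The heart of the proof is the \textbf{Overtwisted Shell Lemma}: any contact shell whose interior can be connected, through the almost contact data, to the standard neighborhood of an overtwisted disc is homotopic rel boundary to a genuine contact filling, and the filling is unique up to homotopy. This is where the explicit model of $D_\ot$ enters. After a contact isotopy supported near $\phi$, I would plant a copy of the standard overtwisted neighborhood inside the ball to be filled, and then perform a Lutz-type twist — a ``connected sum with $\rd D_\ot$'' — that shifts the homotopy class of the shell by precisely the obstruction class, trivializing it. I would prove this lemma by induction on the dimension $(2n+1)$, the base case $n=1$ being Eliashberg's three-dimensional theorem; the inductive step builds the $(2n+1)$-dimensional twist from the $(2n-1)$-dimensional one along the core, which is exactly the fibrewise destabilization mechanism of Theorem~\ref{thrm:slidingOTD}.

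The main obstacle, as expected, is the Overtwisted Shell Lemma, and within it the verification that the Lutz-type modification realizes the \emph{full} obstruction class rather than a proper subclass. Concretely, one must show that the space of contact germs on $S^{2n}$ admitting a genuine filling becomes, once a single overtwisted disc is available, weakly contractible, so that every shell is both fillable and fillable uniquely up to homotopy. Establishing this requires the piecewise-smooth geometry of $D_\ot$ together with a convex-integration and holonomic-approximation argument to interpolate between the model twist and the given boundary data; the genuine bookkeeping cost is keeping every construction parametric in $D^k$ and rel $\op A$ and rel $\phi$ simultaneously. Once the lemma is in hand, feeding it into the skeleton-by-skeleton extension yields the relative parametric existence statement, and hence both displayed conclusions.
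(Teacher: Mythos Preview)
The paper does not prove this theorem. Theorem~\ref{thrm:existh-prnc} is stated as a result of Borman, Eliashberg, and Murphy and is cited from~\cite{boelmu}; the present paper simply quotes it as background in Subsection~\ref{sec:overtwisted} and then uses its corollaries (notably Corollary~\ref{cor:isoctemb} and Corollary~\ref{cor:possiblePS}) as black boxes in the proof of Theorem~\ref{thma}. There is therefore no ``paper's own proof'' to compare your proposal against.

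Your sketch is a plausible high-level outline of the strategy in~\cite{boelmu} (reduction to contact shells, filling shells using the overtwisted model), but it is not something this paper attempts, and several details are off relative to the actual source: the BEM argument does not proceed by induction on the dimension with Eliashberg's $3$-dimensional theorem as a base case, and your invocation of Theorem~\ref{thrm:slidingOTD} as ``exactly the fibrewise destabilization mechanism'' of the inductive step is misplaced---that theorem concerns Legendrian connected sums with $\partial D_\ot^2$ in dimension three and plays its role in this paper in Subsection~\ref{sec:PS2OT}, not in the proof of the BEM $h$-principle. If your goal is to supply a proof for this paper, the correct move is simply to cite~\cite{boelmu}; if your goal is to reconstruct the BEM proof, you should work from that paper directly rather than from the summary here.
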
 %--------------------------------------------------------------------
 As a corollary, the following is proved for isocontact embedding. 
%
% Corollary 2.5. ---------------------------------------------------------------
%
\begin{cor}[Borman, Eliashberg, Murphy~\cite{boelmu}]\label{cor:isoctemb}
  Let $(M,\xi)$ be a connected overtwisted contact manifold of dimension~$2n+1$ 
and $(N,\zeta)$ be an open contact manifold of the same dimension. 
 Let $f\colon N\to M$ be an embedding covered by a bundle homomorphism 
$\Phi\colon TN\to TM$ which preserves contact structures fiberwise and 
conformal symplectic structure on contact hyperplanes. 
 If $df\colon TN\to TM$ is homotopic to $\Phi$ as bundle monomorphisms, 
then there exists a isocontact embedding $\tilde{f}\colon (N,\zeta)\to(M,\xi)$ 
isotopic to $f$. 
 In particular, 
  if a contact manifold is overtwisted, 
any contact open ball of the same dimension can be embedded into it. 
\end{cor}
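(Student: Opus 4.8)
The plan is to deduce this from Theorem~\ref{thrm:existh-prnc}: the corollary is essentially a formal consequence of the \textit{h}-principle, and the work is to repackage the formal data $(f,\Phi)$ so that Theorem~\ref{thrm:existh-prnc} applies, together with the usual care about non-compactness.

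First I would reduce the statement to the following claim: for any relatively compact open submanifold $N_{0}\subset N$ there is an overtwisted contact structure $\xi'$ on $M$, homotopic to $\xi$ through almost contact structures, for which $f|_{N_{0}}$ is a genuine isocontact embedding into $(M,\xi')$. Granting this for one such $N_{0}$: then $\xi$ and $\xi'$ are overtwisted contact structures on $M$ lying in the same almost-contact homotopy class, so the injectivity half of Theorem~\ref{thrm:existh-prnc} (with $A$ empty) joins them by a path of overtwisted contact structures, whence Gray stability (with the customary care near infinity when $M$ is open) yields a contactomorphism $\Psi\colon(M,\xi')\to(M,\xi)$ isotopic to $\mathrm{id}_{M}$; then $\Psi\circ f|_{N_{0}}$ is an isocontact embedding $(N_{0},\zeta|_{N_{0}})\to(M,\xi)$ isotopic to $f|_{N_{0}}$. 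For the ``in particular'' clause, note that when $N$ is an open ball the formal pair $(f,\Phi)$ exists automatically --- any orientation-preserving embedding $f$ of the ball serves, since $N$ is contractible and hence $df$ is homotopic to a fiberwise contact, conformally symplectic bundle isomorphism $\Phi$ --- and $N_{0}$ may be taken diffeomorphic to $N$ by a diffeomorphism isotopic to the inclusion, so the reduction already gives the full conclusion.

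To prove the claim I would push the structure forward. Set $W:=f(N)\subset M$, an open set, and $\zeta':=f_{*}\zeta$, a genuine contact structure on $W$. Fixing a homotopy of bundle monomorphisms $\Phi_{t}$ from $df$ to $\Phi$ covering $f$, the automorphisms $\Phi_{t}\circ(df)^{-1}$ of $TM|_{W}$ run from the identity of $TM|_{W}$ to a fiberwise almost-contact isomorphism carrying $\zeta'$ to $\xi|_{W}$; pushing $\zeta'$ along this path exhibits a homotopy of almost contact structures on $W$ from $\zeta'$ to $\xi|_{W}$. Choose an open set $W_{1}$ with $\overline{f(N_{0})}\subset W_{1}\subset\subset W$ and a cutoff function $\chi\colon M\to[0,1]$ equal to $1$ near $\overline{f(N_{0})}$ and supported in $W_{1}$, and use it together with the above homotopy to build an almost contact structure $\hat\xi$ on $M$ that equals $\zeta'$ near $\overline{f(N_{0})}$, equals $\xi$ outside $W_{1}$, and is homotopic to $\xi$ through almost contact structures. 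Taking $A$ to be a compact set slightly larger than $\overline{f(N_{0})}$, the almost contact structure $\hat\xi$ is a genuine contact structure on $\op{A}$, so the surjectivity half of Theorem~\ref{thrm:existh-prnc} (in the form allowing $M\setminus A$ to be disconnected) produces an overtwisted contact structure $\xi'\in\contot{M}{A}{\hat\xi}$ which agrees with $\zeta'=f_{*}\zeta$ near $\overline{f(N_{0})}$ and is homotopic to $\hat\xi$, hence to $\xi$, through almost contact structures. Thus $f|_{N_{0}}$ is a genuine isocontact embedding into $(M,\xi')$, proving the claim.

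The step I expect to be the genuine obstacle is not any of these individually, but the non-compact bookkeeping required to pass from a single relatively compact piece $N_{0}$ to all of $N$ and to obtain an isotopy with the original $f$: one must run the construction compatibly along an exhaustion $N_{0}\subset N_{1}\subset\cdots$ with $\bigcup_{k}N_{k}=N$ --- or, equivalently, invoke the relative form of Theorem~\ref{thrm:existh-prnc} over the corresponding nested open subsets of $M$ --- so that the resulting structures and contactomorphisms stabilize and the limiting map is a genuine isocontact embedding of all of $(N,\zeta)$ isotopic to $f$; one must also make sense of Gray stability near infinity when $M$ is non-compact, and treat the possibly disconnected complements component by component. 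None of this requires ideas beyond Theorem~\ref{thrm:existh-prnc}, but it is where the care is concentrated; for an open ball the whole argument collapses to a single application of the \textit{h}-principle, which is the form needed in the sequel.
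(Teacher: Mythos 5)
The paper does not actually prove this corollary---it imports it from \cite{boelmu} as a consequence of Theorem~\ref{thrm:existh-prnc}---and your overall strategy (push $\zeta$ forward to $\zeta'=f_*\zeta$, use the homotopy $\Phi_t$ and a cutoff to build an almost contact structure $\hat\xi$ on $M$ equal to $\zeta'$ near the image and to $\xi$ far away, make $\hat\xi$ genuine and overtwisted by the surjectivity half of the \textit{h}-principle, then use $\pi_0$-injectivity plus Gray stability to return to $\xi$) is exactly the standard derivation. The construction of $\hat\xi$ and $\xi'$ and the conclusion that $\Psi\circ f|_{N_0}$ is an isocontact embedding of $(N_0,\zeta|_{N_0})$ isotopic to $f|_{N_0}$ are sound, modulo the points you flag about connectedness of $M\setminus A$ and Gray stability on a non-compact $M$.

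The genuine gap is in the reduction itself. An isocontact embedding of $(N_0,\zeta|_{N_0})$ for a relatively compact $N_0\subset N$ is not an isocontact embedding of $(N,\zeta)$, and your claim that for an open ball ``$N_0$ may be taken diffeomorphic to $N$ by a diffeomorphism isotopic to the inclusion, so the reduction already gives the full conclusion'' conflates diffeomorphism with contactomorphism: $(N_0,\zeta|_{N_0})$ need not be contactomorphic to $(N,\zeta)$. For instance, a relatively compact sub-ball of an overtwisted contact ball can be tight, and no embedding of that sub-ball can ever be promoted to an embedding of the whole ball; so even the ``in particular'' clause genuinely needs the exhaustion argument you defer to your last paragraph. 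That argument is where the content lies, and its inductive step is not routine: at stage $k+1$ one must run Theorem~\ref{thrm:existh-prnc} and Gray stability \emph{relative} to the compact set $A=\overline{f(N_{k-1})}$ on which the structure has already been made to agree with $f_*\zeta$, so that the new contactomorphism is supported away from the part of $N$ already embedded; this requires arranging at each stage that $M\setminus A$ is connected (or absorbing superfluous components into $A$) and that the Gray flow is compactly supported. For the way the corollary is actually used in this paper---embedding the germ of the model plastikstufe along its compact core in Corollary~\ref{cor:possiblePS}---your single-$N_0$ argument does suffice, but as a proof of the corollary as stated it is incomplete.
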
 %---------------------------------------------------------------------
\noindent
 From this corollary, it follows that an overtwisted contact manifold 
should have a plastikstufe (see~\cite{boelmu}). 
 This essentially implies the necessary condition of Theorem~\ref{thma}. 
 We discuss more precisely in Subsection~\ref{sec:OT2PS}. 

  We should mention a relation between loose Legendrian submanifold 
and overtwisted contact structure. 
 To do that, we define trivial Legendrian sphere. 
 Let $\eta_0$ be the standard contact structure 
\begin{equation*}
  \ker\lft.\lft(\sum_{i=1}^{n+1}x_idy_i-y_idx_i\rgt)\rgt|_{TS^{2n+1}} 
\end{equation*}
on the unit sphere $S^{2n+1}\subset\R^{2n+2}$, 
where $(x_1,\dots,x_{n+1},y_1,\dots,y_{n+1})\in\R^{2n+2}$ are coordinates. 
 The $n$-dimensional sphere 
\begin{equation*}
  \Lambda_0:=\lft\{(x_1,\dots,x_{n+1},y_1,\dots,y_{n+1})\in S^{2n+1}\mid 
  y_1=\dots=y_{n+1}=0\rgt\}\subset(S^{2n+1},\eta_0)
\end{equation*}
is Legendrian. 
 Since $(S^{2n+1},\eta_0)\setminus\{point\}$ is contactomorphic 
to the standard contact space $(\R^{2n+1},\xist)$, 
the Legendrian submanifold $\Lambda_0$ is identified 
with a topologically trivial Legendrian sphere in $(\R^{2n+1},\xist)$. 
 Then a Legendrian sphere $\Lambda$ in a contact manifold $(M,\xi)$ 
of dimension~$2n+1$ is said to be \emph{trivial\/} 
if there exists a contact embedding of an open ball 
$U\subset (\R^{2n+1},\eta_0)$ containing $\Lambda$ to $(M,\xi)$ 
which maps $\Lambda_0$ to $\Lambda$. 
 Using these notions, the following relation 
between loose Legendrian submanifold and overtwistedness is proved. 
%
% Proposition 2.6. -------------------------------------------------------------
%
\begin{prop}[Casals, Murphy, and Presas~\cite{camupr}]\label{prop:ls2ot}
  Let $(M,\xi)$ be a contact manifold of dimension~$2n+1>3$. 
 If the trivial Legendrian sphere $\Lambda_0\subset(M,\xi)$ is loose, 
then $\xi$ is overtwisted. 
\end{prop}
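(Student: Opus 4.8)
The plan is to reduce Proposition~\ref{prop:ls2ot} to Theorem~\ref{thrm:existh-prnc} together with Corollary~\ref{cor:isoctemb}, using the looseness of $\Lambda_0$ to import an overtwisted ball. Let me think about the architecture. We are given that the trivial Legendrian sphere $\Lambda_0 \subset (M,\xi)$ is loose. By definition this means there is an open set $U \subset M$ with $(U, U\cap\Lambda_0)$ contactomorphic to a loose chart $(W\times V_\rho, L_0\times Z)$ with $\rho>1$. The point I want to exploit is that looseness of the trivial Legendrian sphere is a ``flexibility'' feature that cannot coexist with tightness: in an overtwisted manifold the trivial Legendrian sphere $\Lambda_0$ actually \emph{is} loose (because $(S^{2n+1},\eta_0)\setminus\{pt\}$ is the standard $\R^{2n+1}$ and one can embed an overtwisted ball nearby by Corollary~\ref{cor:isoctemb}, then perform a negative stabilization inside that ball and cancel it back using looseness of the stabilized curve in the $3$-dimensional factor via Theorem~\ref{thrm:slidingOTD}), so looseness of $\Lambda_0$ is precisely the ``formal'' shadow of overtwistedness. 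The task is to run this implication in the reverse direction.

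First I would set up the almost contact data. Choose a contact ball $B_1 \subset M$ disjoint from $\Lambda_0$ (possible since $\Lambda_0$ is closed of codimension $n+1>1$ and $M$ is open or, if closed, has enough room). On $M$ I want to produce a competitor almost contact structure $\xi'$ that agrees with $\xi$ outside a neighborhood, that is overtwisted on the complement of a small set $A$, and that is formally homotopic to $\xi$ rel $A$; then Theorem~\ref{thrm:existh-prnc} upgrades $\xi'$ to an honest overtwisted contact structure in the same homotopy class, and since overtwistedness is a property of the homotopy class of the contact structure (once we know one representative is overtwisted, the $h$-principle says $\xi$ itself is overtwisted — more precisely $\pi_0$ of the overtwisted contact structures surjects onto, indeed bijects with, $\pi_0$ of almost contact structures, so any almost contact structure formally homotopic to an overtwisted one is itself represented by an overtwisted contact structure, and uniqueness forces $\xi$ to be overtwisted). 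The real content, then, is to manufacture the overtwisted piece out of the loose chart.

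Here is where the $3$-dimensional picture enters, and this is the step I expect to be the main obstacle. In the loose chart $W\times V_\rho$ with $\rho>1$, the Legendrian $L_0$ is a negatively stabilized arc in $(\R^3,\xist)$; the inequality $\rho>1$ is exactly the ``enough room'' condition that lets one cancel a negative stabilization against a positive one, i.e., the loose chart contains the germ of an overtwisted disc in its $\R^3$-factor after a contact isotopy supported in $W\times V_\rho$. Concretely, I would invoke the relation in Theorem~\ref{thrm:slidingOTD}: sliding $L_0$ across the boundary of an overtwisted disc realizes a negative (de)stabilization without changing the Legendrian isotopy type, and conversely the presence of a negatively stabilized Legendrian in a sufficiently large chart is equivalent, after a contact deformation, to the presence of an overtwisted disc in the chart. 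Thus inside $U \cong W\times V_\rho$ one finds (after a homotopy of contact structures supported in $U$, or equivalently by modifying $\xi$ inside $U$) an embedded overtwisted disc $D^2_\ot$, and taking its product with the zero-section $Z$ (compactified appropriately to a small closed core, or simply using $D^2_\ot \times \{pt\}$) yields an embedded plastikstufe, hence a PS-overtwisted structure, inside the loose chart. The delicate point is that the modification of $\xi$ inside $U$ must be done through almost contact structures in a way that returns to $\xi$ on $\partial U$ and is formally homotopic to $\xi$ rel the complement of $U$ — i.e., the negative/positive stabilization cancellation is formally trivial, which is true because stabilization does not change the formal Legendrian isotopy class and the extra overtwisted disc is formally standard; one has to check this carefully, quoting the formal computations in~\cite{murphy12} and~\cite{camupr}.

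Finally I would assemble: having produced, inside the loose chart and after a deformation of $\xi$ supported there, an overtwisted disc (in the sense of~\cite{boelmu}), the resulting contact structure $\xi'$ lies in $\contot{M}{A}{\xi}$ for a suitable small $A$ (e.g. $A$ a point, or empty if $M$ is appropriately connected), and $\xi'$ is homotopic to $\xi$ as an almost contact structure since the deformation was through almost contact structures. By Theorem~\ref{thrm:existh-prnc} the class $[\xi'] = [\xi]$ in $\pi_0(\cont{M}{A}{\xi})$ contains an overtwisted contact structure, and since $\xi$ and $\xi'$ are homotopic as almost contact structures and $\xi'$ can be taken overtwisted, the uniqueness part of the $h$-principle shows $\xi$ is isotopic to an overtwisted structure, hence $\xi$ itself is overtwisted. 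Alternatively, and more cleanly, one applies Corollary~\ref{cor:isoctemb} contrapositively: if $\xi$ were \emph{not} overtwisted one could not embed the overtwisted ball we constructed — but we did embed it (up to homotopy of contact structures, which is enough to detect overtwistedness of the ambient structure). I expect the bookkeeping of ``modifying $\xi$ inside $U$ versus embedding a standard overtwisted model'' to be the part requiring the most care, and I would lean on the standard-neighborhood theorem for plastikstufes and the loose-chart formalism of~\cite{muniplst} to make it rigorous.
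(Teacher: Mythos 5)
First, a point of order: the paper does not prove Proposition~\ref{prop:ls2ot} at all --- it is imported from~\cite{camupr} and used as a black box in Subsection~\ref{sec:PS2OT} --- so there is no proof in the paper to compare yours against, and your proposal must stand on its own. It does not, for two reasons. The first gap is the claim that the loose chart ``contains the germ of an overtwisted disc in its $\R^3$-factor after a contact isotopy supported in $W\times V_\rho$.'' A loose chart is, by definition, an open subset of the standard contact $(\R^{2n+1},\xist)$, which contains no overtwisted disc, and Theorem~\ref{thrm:slidingOTD} runs in the opposite direction from the way you invoke it: it says that an ambient overtwisted disc lets one destabilize a Legendrian knot, not that a negatively stabilized Legendrian certifies an overtwisted disc nearby (in dimension~$3$, stabilized Legendrian knots exist in abundance in tight contact manifolds). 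Consequently you never produce an overtwisted disc in $(M,\xi)$ itself; you only modify $\xi$ inside $U$ to some other structure $\xi'$ that has one.

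The second gap is decisive. From ``$\xi'$ is overtwisted and formally homotopic to $\xi$'' you conclude that $\xi$ is overtwisted, appealing to the bijection $j_\ast\colon\pi_0(\contot{M}{A}{\xi})\to\pi_0(\cont{M}{A}{\xi})$ of Theorem~\ref{thrm:existh-prnc}. That bijection says every formal class contains an overtwisted representative and that overtwisted representatives in the same formal class are isotopic; it does \emph{not} say that every contact structure formally homotopic to an overtwisted one is overtwisted --- the standard tight contact structure on $S^3$ is formally homotopic to an overtwisted one and is not overtwisted, so ``uniqueness forces $\xi$ to be overtwisted'' is simply false. Likewise, the ``contrapositive of Corollary~\ref{cor:isoctemb}'' only derives non-overtwistedness from a \emph{failure} to embed a contact ball, which is not the implication you need; what you would need is an honest contact embedding of an overtwisted ball into the unmodified $(M,\xi)$, and that is precisely what your construction does not supply. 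Any correct proof --- and in particular the one in~\cite{camupr}, which exploits Murphy's \textit{h}-principle for the loose sphere $\Lambda_0$ within the fixed contact structure --- must locate the overtwisted object of~\cite{boelmu} in $\xi$ itself rather than in a formally equivalent neighbour.
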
 %--------------------------------------------------------------------
\noindent
 This property is used to show the sufficient condition of Theorem~\ref{thma} 
in Subsection~\ref{sec:PS2OT}. 

%--- 2.4. ----------------------------------------------------------------------
\subsection{\textit{h}-principle}\label{sec:h-princ}
  In this section, we review the Smale-Hirsch immersion Theorem, 
and Gromov's \textit{h}-principle for subcritical isotropic submanifolds. 
 For the subjects in this subsection, 
the readers should consult~\cite{elmi} and~\cite{madachi} 
as well as~\cite{gromov}. 

  First, we introduce the Smale-Hirsch immersion theorem. 
 Let $M$ and $V$ be manifolds of dimension $n$ and $p$, respectively. 
 Let $\mathrm{Imm}(M,V)$ denote a set of all immersions from $M$ to $V$ 
endowed with $C^\infty$-topology, 
and $\mathrm{Mono}(TM,TV)$ a set of all monomorphisms, 
that is a bundle homomorphisms which are fiberwise injective, from $TM$ to $TV$ 
endowed with compact-open topology. 
%
% Theorem 2.6. (the Smale-Hirsch immersion theorem) ----------------------------
%
\begin{thrm}[the Smale-Hirsch immersion theorem]\label{thrm:shimmthm}
  If $n=\dim M<p=\dim V$, then the inclusion
\begin{equation*}
  \mathrm{Imm}(M,V)\hookrightarrow\mathrm{Mono}(TM,TV),\qquad 
  f\mapsto df
\end{equation*}
is a weak homotopy equivalence. 
\end{thrm}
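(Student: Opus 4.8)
The final statement is the Smale–Hirsch immersion theorem, which the paper cites as background rather than proves. I will sketch the classical proof via Gromov's convex-integration / sheaf-theoretic h-principle machinery.

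\medskip

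The plan is to recognize the immersion relation as an open, $\mathrm{Diff}$-invariant differential relation in the $1$-jet bundle and then invoke the general h-principle for such relations over open manifolds, bootstrapped to closed manifolds by the microflexibility argument. First I would set up the relation: a section $F$ of $\mathrm{Mono}(TM,TV)$ over a map $f\colon M\to V$ is a \emph{formal immersion}, and the genuine immersions are precisely the holonomic sections, those of the form $F=df$ for an actual smooth $f$. The condition $\dim M<\dim V$ is exactly what makes the fiber of the relation (the space of injective linear maps $\R^n\to\R^p$, i.e. the Stiefel-type manifold $V_n(\R^p)$ of $n$-frames in $\R^p$) nonempty and, crucially, connected and highly connected, which is the geometric input that the formal data can be integrated.

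\medskip

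The key steps, in order, are as follows. First I would prove the local statement over a cube (or a manifold that immerses into $\R^n$ with trivial normal data), where convex integration applies directly: given a formal immersion, one integrates one coordinate direction at a time, using that the set of admissible velocity vectors (those linear maps whose addition to the current partial derivative keeps the frame injective) is open and its convex hull is everything needed, so that a $C^0$-close genuine immersion with the prescribed formal derivative can be produced by high-frequency oscillation. Second, I would upgrade from this local/parametric solvability to \emph{microflexibility} of the immersion sheaf: small extensions of holonomic sections over slightly larger sets exist and depend continuously on parameters. Third, I would use the fact that the relation is $\mathrm{Diff}(M)$-invariant together with microflexibility to run the standard induction over a handle/cell decomposition of $M$ — equivalently, Gromov's theorem that an open, $\mathrm{Diff}$-invariant, microflexible relation satisfies the parametric h-principle — thereby extending the genuine immersion over all of $M$ while keeping it parametrized by the same index set so that families are handled uniformly. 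Finally I would translate the resulting statement that holonomic sections are weakly homotopy equivalent to all formal sections into the claim that $\mathrm{Imm}(M,V)\hookrightarrow\mathrm{Mono}(TM,TV)$, $f\mapsto df$, is a weak homotopy equivalence, checking that the $C^\infty$-topology on immersions and the compact-open topology on monomorphisms are the ones for which these equivalences hold.

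\medskip

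The hard part will be the passage from the open manifold (or local) case to arbitrary, possibly closed, $M$: convex integration and the naive ``integrate over an open handle'' scheme give the result immediately only when $M$ is open, and for closed $M$ one genuinely needs the microflexibility of the immersion relation to push the construction across the top-dimensional handle without an extra degree of freedom. This is exactly where $\dim M<\dim V$ is indispensable a second time — it guarantees enough room in the target to absorb the oscillations near the core of the top cell — and where I would have to argue most carefully, either by citing Gromov's microflexibility theorem as packaged in \cite{elmi} or \cite{gromov} and verifying its hypotheses for $\mathrm{Imm}$, or by using Hirsch's original covering-homotopy formulation. I would also need to confirm the parametric refinement (that the equivalence is a \emph{weak homotopy} equivalence, not merely a bijection on $\pi_0$), which comes for free from the parametric version of the same machinery but should be stated explicitly.
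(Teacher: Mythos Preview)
You correctly note at the outset that the paper does not prove this statement: Theorem~\ref{thrm:shimmthm} is quoted as background in Subsection~\ref{sec:h-princ}, with the reader referred to \cite{elmi}, \cite{madachi}, and \cite{gromov}; no argument is given beyond the remark that the parametric $C^0$-dense \textit{h}-principle holds for such immersions. Consequently there is no ``paper's proof'' against which to compare your proposal.

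Your sketch itself is a sound outline of the standard Gromov--Hirsch argument: identifying immersions as holonomic sections of an open, $\mathrm{Diff}$-invariant relation in the $1$-jet space, establishing microflexibility, and running the handle-by-handle induction to obtain the parametric \textit{h}-principle. One small caution: in the passage to closed $M$ you slightly overstate the role of microflexibility across the top cell. In the references the paper actually cites (notably \cite{elmi}), the closed case with $n<p$ is handled more directly --- one removes a point, applies the open-manifold \textit{h}-principle to $M\setminus\{\text{pt}\}$, and uses the codimension $p-n\ge 1$ to control the behaviour near the puncture --- rather than by invoking microflexibility per se. Hirsch's original treatment in \cite{madachi} proceeds via the covering homotopy property and obstruction theory instead of convex integration. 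Either route is acceptable, but if you want to align with the paper's citations you should point to one of those packagings rather than a bespoke microflexibility argument.
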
 %--------------------------------------------------------------------
\noindent
 In other words, the parametric $C^0$-dense \textit{h}-principle holds 
for such immersions. 
 The existence of a path of monomorphisms 
implies the existence of a path of immersions. 

  Next, we introduce Gromov's \textit{h}-principle 
for subcritical isotropic embeddings. 
 In this paper, we need a version for contact structures. 
 Let $(W,\xi)$ be a contact manifold of dimension~$2n+1$, 
and $V$ a manifold of dimension~$m<n$. 
 Let $\mathrm{Emb_{isot}}(V,W)$ denote a set of all isotropic embeddings 
from $V$ to $(W,\xi)$, 
and $\mathrm{Mono^{emb}_{isot}}(TV,TW)$ a set of all isotropic monomorphisms, 
that is a bundle homomorphisms which are fiberwise isotropic injective, 
from $TV$ to $(TW,\xi)$. 
%
% Theorem 2.7. -----------------------------------------------------------------
%
\begin{thrm}[Gromov]\label{thrm:g_h-pr}
  The inclusion 
\begin{equation*}
  \mathrm{Emb_{isot}}(V,W)\hookrightarrow\mathrm{Mono^{emb}_{isot}}(TV,TW), \qquad
  f\mapsto df
\end{equation*}
is a weak homotopy equivalence. 
\end{thrm}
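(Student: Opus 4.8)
The plan is to recognize Theorem~\ref{thrm:g_h-pr} as the parametric h-principle for a single first-order differential relation and to establish it by Gromov's method of microflexible invariant relations, following~\cite{gromov} and~\cite{elmi}. First I would reformulate the geometric content. A submanifold tangent to $\xi$ is automatically isotropic: if $\alpha$ is a local contact form and $\iota\colon V\to W$ satisfies $\iota^\ast\alpha=0$, then $\iota^\ast d\alpha=d(\iota^\ast\alpha)=0$, so on the holonomic side the condition ``isotropic embedding'' reduces to ``embedding tangent to $\xi$''. Encoding tangency to $\xi$, fibrewise injectivity, and infinitesimal isotropy of the image as conditions on $1$-jets defines a relation $\mathcal{R}\subset J^1(V,W)$ whose holonomic sections covering embeddings are exactly the isotropic embeddings, and whose arbitrary sections are the elements of $\mathrm{Mono^{emb}_{isot}}(TV,TW)$. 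The assertion to be proved is then that the scanning map $f\mapsto df$ (equivalently $f\mapsto j^1f$) induces a weak homotopy equivalence between genuine and formal solutions of $\mathcal{R}$.

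The two properties of $\mathcal{R}$ that drive the argument are \emph{microflexibility} and \emph{invariance}. For microflexibility I would invoke the parametric, relative isotropic neighborhood theorem: a family of isotropic deformations defined over a subdomain of $V$, or for a short time, extends, because the normal geometry of a subcritical isotropic submanifold is governed by the conformal symplectic bundle $\xi/TV$, and the contact isotopy extension theorem realizes prescribed infinitesimal motions by ambient contact Hamiltonian flows. For invariance I would observe that $\mathcal{R}$ is preserved by the pseudogroup $\mathcal{D}$ of local contactomorphisms of $(W,\xi)$, which acts transitively on $W$, while the contact Hamiltonian fields act transitively enough on $1$-jets to supply the fibrewise moves the local h-principle demands. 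Granting both properties, Gromov's theorem that a microflexible, $\mathcal{D}$-invariant relation satisfies the parametric local h-principle applies; concretely I would run the Holonomic Approximation Theorem of~\cite{elmi}, which over a neighborhood of a positive-codimension skeleton of $V$ produces, after a $C^0$-small isotopy, a holonomic section $C^0$-close to any prescribed formal solution, and then glue these local genuine solutions using invariance.

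The main obstacle, and the only place where subcriticality $m=\dim V<n$ is genuinely used, is the passage from a lower skeleton to all of the closed manifold $V$ together with the upgrade from immersions to embeddings. Holonomic approximation only yields a genuine isotropic map near a skeleton of positive codimension, so to cover all of $V$ I would first treat the open manifold $V\setminus\{\mathrm{pt}\}$, over which the local h-principle becomes global, and then cap off the missing top cell using microflexibility and the room supplied by the unused symplectic directions of $\xi$; this capping is available precisely because $2m<\dim\xi=2n$. To obtain embeddings and isotopies rather than immersions and regular homotopies, I would carry out a parametric general-position argument: since $2m<2n+1=\dim W$, a generic member of a compact family of isotropic immersions is an embedding, and the double points occurring in a family are removable by a Whitney-type isotopy performed inside $\xi$, the removal being unobstructed because the subcritical codimension leaves a spare normal direction within the contact hyperplane along which to guide the Whitney disc. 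Executing this self-intersection removal parametrically while remaining isotropic is the delicate technical heart of the proof, and it is exactly the step that breaks down in the critical Legendrian case $m=n$.
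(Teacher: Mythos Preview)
The paper does not supply a proof of Theorem~\ref{thrm:g_h-pr}: it is quoted as a classical result of Gromov, and the reader is explicitly referred to \cite{gromov}, \cite{elmi}, and \cite{madachi} for the argument. There is therefore no proof in the paper to compare your attempt against.

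Your sketch follows the standard route taken in those references---encode the condition as a first-order differential relation in $J^1(V,W)$, verify microflexibility and invariance under the pseudogroup of local contactomorphisms, run holonomic approximation over a positive-codimension skeleton, globalize using the subcritical hypothesis $m<n$, and upgrade from immersions to embeddings by a parametric general-position argument exploiting $2m<2n+1$---and as an outline it is sound. One point worth tightening: your justification of microflexibility via the isotropic neighborhood theorem and contact isotopy extension is somewhat indirect; in \cite{elmi} microflexibility for the isotropic relation is established more concretely through the local integrability of formal isotropic sections combined with the openness of the immersion condition, and you would want to make that step precise rather than appeal to neighborhood theorems, which are more naturally consequences of the local theory than inputs to it.
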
 %--------------------------------------------------------------------
\noindent
 In other words, the parametric $C^0$-dense \textit{h}-principle holds 
for such embeddings. 
 The existence of a formal isotopy of subcritical isotropic embeddings 
implies the existence of a path of isotropic embeddings. 

%%% Section 3. %%%%%%%%%%%%%%%%%%%%%%%%%%%%%%%%%%%%%%%%%%%%%%%%%%%%%%%%%%%%%%%%%
\section{Rotation class of plastikstufe}\label{sec:rotPS}
%%%%%%%%%%%%%%%%%%%%%%%%%%%%%%%%%%%%%%%%%%%%%%%%%%%%%%%%%%%%%%%%%%%%%%%%%%%%%%%%
  Rotation class of a plastikstufe is introduced in Subsection~\ref{sec:rotdef}.
 In order to discuss plastikstufe with toric core, 
trivial rotation for such plastikstufe is introduced 
in Subsection~\ref{sec:torictrivial}. 

%--- 3.1. ----------------------------------------------------------------------
\subsection{Rotation class and loose Legendrian submanifold}
\label{sec:rotdef}
  We introduce the rotation class of a plastikstufe. 
 Then we introduce some results 
on the relation between plastikstufes with spherical core 
and loose Legendrian submanifolds. 

  The rotation class of a plastikstufe is defined 
as the relative rotation class of the leaf ribbon of the plastikstufe 
with respect to the ``standard'' Legendrian ribbon. 
 It is defined for the small plastikstufes with spherical core 
by Murphy, Niederkr\"uger, Plamenevskaya, and Stipsicz in~\cite{muniplst}. 
 Then we first introduce the relative rotation class 
of two Legendrian immersions. 
 Then we define rotation class of plastikstufes with spherical core 
following~\cite{muniplst}. 
 In Subsection~\ref{sec:torictrivial}, we introduce such notion 
for plastikstufes with ``toric'' core. 

  We define the relative rotation class of Legendrian immersions. 
 Let $(M,\xi)$ be a contact manifold of dimension~$2n+1$ 
and $\alpha$ a contact form defining $\xi$. 
 And let $f,g\colon\Lambda\to(M,\xi)$ be two Legendrian immersions. 
 Assume that there exists an open ball $U\subset M$ 
that contains both of the image $f(\Lambda)$, $g(\Lambda)$. 
 First, we regard the contact structure $\xi|_U$ restricted to $U$ 
as the trivial complex vector bundle as follows. 
 Let $J$ be an almost complex structure on $\xi$ 
compatible with the conformal symplectic structure induced by $d\alpha|_\xi$. 
 By taking a $J$-complex trivialization of $(\xi,J)$, 
we can regard $(\xi|_U,J)$ as the trivial bundle $\C^n\times U\to U$. 
 Then, by this identification, $df$ can be regarded as the bundle map 
$df\colon T\Lambda\to\Lambda\times\C^n$, 
for which $(df)_x(T_x\Lambda)\subset(\xi_{f(x)},J)$ is totally real 
since $f\colon\Lambda\to(M,\xi)$ is Legendrian. 
 Therefore, we have the complexification 
\begin{equation*}
  df^\C\colon T\Lambda\otimes\C\to\Lambda\times\C^n
\end{equation*}
which is fiberwise complex isomorphism. 
 For another Legendrian immersion $g\colon\Lambda\to(M,\xi)$, 
we also have a complex bundle map 
$dg^\C\colon T\Lambda\otimes\C\to\Lambda\times\C^n$ 
which is fiberwise complex isomorphism. 
 Then, from the two maps $df^\C$ and $dg^\C$, 
a mapping $\varphi\colon\Lambda\to\textup{GL}(n,\C)$ is determined by 
\begin{equation*}
  (dg^\C)_x=\varphi(x)\circ(df^\C)_x,\qquad x\in\Lambda. 
\end{equation*}
 Now, we define the \emph{relative rotation class}\/ of $g$ with respect to $f$ 
as the homotopy class of $\varphi$ 
in $[\Lambda,\textup{GL}(n,\C)]\cong[\Lambda,U(n)]$. 

  The rotation class of a plastikstufe is defined as follows. 
 Let $\mathcal{P}_{S^{n-1}}$ be a small plastikstufe with spherical core $S^{n-1}$
in a contact manifold $(M,\xi)$ of dimension~$2n+1$. 
 Then a leaf ribbon of $\mathcal{P}_{S^{n-1}}$ is a Legendrian submanifold 
of $(M,\xi)$ diffeomorphic to $(0,1)\times S^{n-1}$. 
 Let $f\colon (0,1)\times S^{n-1}\to(M,\xi)$ 
be the Legendrian embedding corresponding to the leaf ribbon. 
 Since $\mathcal{P}_{S^{n-1}}$ is small, we can apply the discussion above. 
 On the other hand, let $D\subset(M,\xi)$ be a Legendrian disc. 
 Note that all Legendrian discs are isotopic as Legendrian submanifolds. 
 Then we take $\inter D\setminus\{\text{point}\}\subset(M,\xi)$ 
as the standard Legendrian submanifold diffeomorphic to $(0,1)\times S^{n-1}$. 
 Let $g\colon(0,1)\times S^{n-1}\to(M,\xi)$ 
be the Legendrian embedding 
corresponding to the standard Legendrian $(0,1)\times S^{n-1}$. 
 Now, the \emph{rotation class}\/ of the small plastikstufe 
$\mathcal{P}_{S^{n-1}}$ 
is defined as the relative rotation class of $f$ with respect to $g$. 
 The small plastikstufe $\mathcal{P}_{S^{n-1}}$ is said 
to have \emph{trivial rotation}\/ if the rotation class vanishes. 

  Using this notion, rotation class, a relation between plastikstufe 
and loose Legendrian submanifold is proved. 
%
% Proposition 3.1. -------------------------------------------------------------
%
\begin{prop}[Murphy, Niederkr\"uger, Plamenevskaya, Stipsicz~\cite{muniplst}]
  Let $\mathcal{P}_{S^{n-1}}$ be a small plastikstufe with spherical core 
and trivial rotation in a contact manifold $(M,\xi)$ of dimension~$2n+1>3$. 
 Then any Legendrian submanifold $\Lambda\subset(M,\xi)$ 
disjoint from $\mathcal{P}_{S^{n-1}}$ is loose. 
\end{prop}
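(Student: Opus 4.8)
The plan is to manufacture a loose chart inside $\Lambda$ by sliding a strand of $\Lambda$ over the plastikstufe, carrying out the $3$-dimensional move of Theorem~\ref{thrm:slidingOTD} fibrewise over the core sphere. Since $\mathcal{P}_{S^{n-1}}$ is small, I would work inside its standard tubular neighbourhood $\ups\cong(\R^3\times T^\ast S^{n-1},\xi_\textup{PS})$ with $\xi_\textup{PS}=\ker(\alpha_\ot+\lambdast)$ and $\mathcal{P}_{S^{n-1}}=D_\ot^2\times B_0$ (Subsection~\ref{sec:plastikstufe}). Choose a point $p\in\Lambda$; in a contact Darboux chart around $p$ the Legendrian $\Lambda$ appears as the standard flat Legendrian $n$-plane, which I write as $A_0\times Z$ with $A_0\subset(\R^3,\xist)$ a flat Legendrian arc and $Z\subset T^\ast\R^{n-1}$ the zero section. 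Since $2n+1>3$ the submanifold $\mathcal{P}_{S^{n-1}}$ has codimension $\ge2$, so $M\setminus\mathcal{P}_{S^{n-1}}$ is connected; routing a Legendrian arc through it I would bring a small neighbourhood of $p$ in $\Lambda$, by a Legendrian isotopy fixing the rest of $\Lambda$, into $\ups$ so that its $Z$-factor is carried into a coordinate ball of the zero section $S^{n-1}\subset T^\ast S^{n-1}$ and its $A_0$-factor lies in a fibre $\R^3\times\{\ast\}$ close to $\rd D_\ot^2$, all while staying disjoint from $\mathcal{P}_{S^{n-1}}$ itself.

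Next I would perform the slide. In each slice $\R^3\times\{s\}$, with $s$ ranging over the coordinate ball of $S^{n-1}$ above, replace the arc $A_0$ by the Legendrian connected sum $A_0\#\rd D_\ot^2$; by Theorem~\ref{thrm:slidingOTD} this is a negatively stabilized arc, namely the model curve $L_0\subset(\R^3,\xist)$ inside a convex ball $W$, and it is Legendrian isotopic to $A_0$ within that slice. Performing this continuously in $s$ produces a Legendrian submanifold $\tilde\Lambda$ that coincides with $\Lambda$ outside a ball and equals $L_0\times Z$ inside a region of the form $W\times V_\rho$. It remains to check that this is a genuine loose chart, i.e.\ that $\rho>1$ and that $(W\times V_\rho,L_0\times Z)$ is contactomorphic to the model loose chart; this is exactly where the hypothesis enters. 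Vanishing of the rotation class means the leaf ribbon is Legendrian isotopic to the standard punctured disc $\inter D^n\setminus\{0\}$, so the sliding construction can be arranged to terminate in the untwisted product $L_0\times Z$ rather than a complex-framing-twisted variant; and since the cotangent fibres of $T^\ast S^{n-1}$ in the model are unbounded, one may take $\rho$ as large as one likes, in particular $\rho>1$. Hence $\tilde\Lambda$ is loose.

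Finally, $\tilde\Lambda$ is Legendrian isotopic to $\Lambda$. The slice-wise Legendrian isotopies $A_0\#\rd D_\ot^2\simeq A_0$ furnished by Theorem~\ref{thrm:slidingOTD} can be chosen to depend continuously on $s\in S^{n-1}$ by compactness of $S^{n-1}$ and the product structure of the model $\ups$; composing with the preliminary Legendrian isotopy of the first paragraph and extending by the identity, one obtains an ambient Legendrian isotopy of $(M,\xi)$ carrying $\tilde\Lambda$ back to $\Lambda$. As looseness is preserved under Legendrian isotopy (transport the loose chart along the ambient contact isotopy), $\Lambda$ is loose.

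I expect the main obstacle to be the parametric step: upgrading the $3$-dimensional sliding move of Theorem~\ref{thrm:slidingOTD} to an honest Legendrian isotopy in dimension $2n+1$, while simultaneously arranging that the resulting family of negatively stabilized arcs assembles into the product loose chart $L_0\times Z$ with $\rho>1$. Smallness of $\mathcal{P}_{S^{n-1}}$ is what confines the construction to the standard neighbourhood, and triviality of the rotation class is precisely what removes the twisting of the complex framing over the core that would otherwise obstruct the identification with the model loose chart.
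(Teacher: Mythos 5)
You are reconstructing the right architecture: the paper does not reprove this proposition (it quotes it from \cite{muniplst}), but its detailed proof of the toric analogue, Theorem~\ref{thrm:pstc2loose}, follows exactly your plan of positioning a piece of $\Lambda$ in the standard neighbourhood $\ups$ and sliding it over the plastikstufe fibrewise via Theorem~\ref{thrm:slidingOTD}. However, two steps in your execution have genuine gaps. The first is the fibrewise slide over only a \emph{coordinate ball} of the core. Near the boundary of that ball the connected sum must taper back to the unmodified arc $A_0$, so the arcs $A_s$ vary with $s$; but a union $\bigcup_s A_s\times\{s\}$ of arcs in the slices $\R^3\times\{s\}$ is Legendrian for $\ker(\alpha_\ot+\lambdast)$ only if the variation $\partial A_s/\partial s$ is tangent to $\ker\alpha_\ot$, which fails for a genuine Legendrian isotopy in the slice. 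The tapered object is therefore not Legendrian. This is precisely why \cite{muniplst} and the proof of Theorem~\ref{thrm:pstc2loose} perform the connected sum \emph{uniformly over the entire closed core}, so that the result is an honest product; that in turn forces the piece of $\Lambda$ being slid to be a full punctured Legendrian disc $\cong(0,1)\times S^{n-1}$ placed in leaf-ribbon position over all of $S^{n-1}$. Achieving that placement is where trivial rotation is indispensable: it is the obstruction, measured by the relative rotation class and killed via Smale--Hirsch plus Gromov's subcritical \textit{h}-principle (Lemma~\ref{lem:key} in the toric case), to Legendrian-isotoping the standard punctured disc of $\Lambda$ onto a leaf ribbon. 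A warning sign in your write-up is that your coordinate-ball version would never actually invoke the rotation hypothesis, since every framing over a ball is trivial.

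The second gap is the justification of $\rho>1$. The standard neighbourhood $\ups$ is only contactomorphic to a neighbourhood of $D^2_\ot\times B_0$, i.e.\ a bounded disc bundle in the $T^\ast S^{n-1}$ directions, so you cannot ``take $\rho$ as large as one likes.'' The correct argument, as at the end of the proof of Theorem~\ref{thrm:pstc2loose}, goes the other way: fix the available $\rho>0$, arrange the negative stabilization to have size $\epsilon<\rho$, and apply the rescaling $(x,y,z)\mapsto(x/\epsilon,y/\epsilon,z/\epsilon^2)$, which normalizes the stabilization and replaces $V_\rho$ by $V_{\rho/\epsilon}$ with $\rho/\epsilon>1$. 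With the slide performed over the whole core and this rescaling in place, your argument becomes the paper's.
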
 %--------------------------------------------------------------------
\noindent
  Combining this with Proposition~\ref{prop:ls2ot}, 
the following relation between plastikstufes and overtwistedness follows 
as a corollary. 
%
% Corollary 3.2. ---------------------------------------------------------------
%
\begin{cor}[Casals, Murphy, Presas~\cite{camupr}]
  Let $(M,\xi)$ be a contact manifold of dimension~$2n+1>3$. 
 If there exists a small plastikstufe $\mathcal{P}_{S^{n-1}}\subset(M,\xi)$ 
with spherical core and trivial rotation 
then the contact structure $\xi$ is overtwisted. 
\end{cor}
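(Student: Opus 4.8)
The plan is to obtain this statement by simply chaining together the two results quoted immediately before it: the Proposition of Murphy, Niederkrüger, Plamenevskaya and Stipsicz (any Legendrian submanifold disjoint from a small plastikstufe with spherical core and trivial rotation is loose) and Proposition~\ref{prop:ls2ot} (a loose trivial Legendrian sphere forces $\xi$ to be overtwisted). The only thing that must be supplied by hand is a \emph{trivial} Legendrian sphere in $(M,\xi)$ that happens to be disjoint from $\mathcal{P}_{S^{n-1}}$; once such a sphere is produced, the conclusion is immediate.

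To produce it I would argue as follows. Since $\mathcal{P}_{S^{n-1}}$ is small, it is contained in an embedded open ball $\mathcal{B}\subset(M,\xi)$. The plastikstufe is a compact submanifold of $\mathcal{B}$ of dimension $n+1$, hence of codimension $n\geq 2$, so $\mathcal{B}\setminus\mathcal{P}_{S^{n-1}}$ is open and nonempty. Choosing a point of $\mathcal{B}\setminus\mathcal{P}_{S^{n-1}}$, Darboux's theorem provides a contact embedding of a ball of $(\R^{2n+1},\xist)\cong(S^{2n+1},\eta_0)\setminus\{\text{point}\}$ into $\mathcal{B}\setminus\mathcal{P}_{S^{n-1}}$, which after shrinking may be taken to have closure avoiding $\mathcal{P}_{S^{n-1}}$; let $\Lambda$ be the image of the standard Legendrian sphere $\Lambda_0$ under this embedding. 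By the definition recalled in Subsection~\ref{sec:overtwisted}, $\Lambda$ is a trivial Legendrian sphere, and by construction $\Lambda\cap\mathcal{P}_{S^{n-1}}=\emptyset$.

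The remaining two steps are then purely citations. Since $2n+1>3$ and $\mathcal{P}_{S^{n-1}}$ is a small plastikstufe with spherical core and trivial rotation, the quoted MNPS Proposition applies to $\Lambda$ and shows that $\Lambda$ is loose. As $\Lambda$ is a trivial Legendrian sphere which is loose, Proposition~\ref{prop:ls2ot} then yields that $\xi$ is overtwisted, which is the assertion.

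I do not expect any genuine obstacle here: all of the geometric content lives in the two cited propositions, and the present statement is a formal corollary. The only point worth a moment's care is checking that the Legendrian sphere one writes down really is ``trivial'' in the precise sense of the definition --- i.e.\ that it is the image of $\Lambda_0$ under a contact embedding of a ball from $(\R^{2n+1},\eta_0)$ --- and that such a ball can be fitted into the complement of the plastikstufe, both of which are handled by the smallness hypothesis together with the codimension count $n\geq 2$ used above.
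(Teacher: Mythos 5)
Your proposal is correct and follows exactly the route the paper intends: the paper states this corollary as an immediate combination of the MNPS proposition with Proposition~\ref{prop:ls2ot}, and its own proof of the analogous toric-core statement (Proposition~\ref{prop:suff}) proceeds by the same chain --- produce a trivial Legendrian sphere disjoint from the plastikstufe, conclude it is loose, then invoke Proposition~\ref{prop:ls2ot}. Your explicit Darboux-chart construction of the disjoint trivial Legendrian sphere is a slightly more careful version of the paper's ``applying some isotopy, we may assume disjointness,'' but it is the same argument.
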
 %---------------------------------------------------------------------

%--- 3.2. ----------------------------------------------------------------------
\subsection{Trivial rotation of a plastikstufe with toric core}
\label{sec:torictrivial}
  The rotation class of a plastikstufe with toric core is defined 
in this subsection. 
 Recall that the rotation class of a plastikstufe with spherical core 
is defined, in the previous subsection, 
as a relative rotation class with respect to the punctured Legendrian disc. 
 Instead of that Legendrian submanifold, 
we need the ``standard'' Legendrian submanifold 
for plastikstufes with toric core. 
 We first define such Legendrian submanifold, then define the rotation class. 

  First of all, we observe what the ``standard'' Legendrian submanifold 
should be. 
 Let $(M,\xi)$ be a contact manifold of dimension~$2n+1$. 
 Recall that the ``standard'' Legendrian submanifold 
for plastikstufes with spherical core $S^{n-1}$ is the punctured Legendrian disc 
$\inter D^n\setminus\{\text{point}\}$ (see Subsection~\ref{sec:rotdef}). 
 In other words, it is a Legendrian submanifold diffeomorphic 
to the leaf ribbon $(0,1)\times S^{n-1}$ which is unique up to isotopy 
(see Subsection~\ref{sec:plastikstufe} for definition of leaf ribbon). 
 Therefore, for plastikstufe with toric core $T^{n-1}$, 
we need a Legendrian submanifold 
diffeomorphic to the leaf ribbon $(0,1)\times T^{n-1}$ 
which is unique up to isotopy. 

  We define the ``standard'' Legendrian submanifold 
diffeomorphic to $(0,1)\times T^{n-1}$ 
in a contact manifold $(M,\xi)$ of dimension~$2n+1>3$. 
 If $n=2$ then $T^{n-1}=T^1=S^1$. 
 Then the definition is the same as the spherical case. 
 We assume $n>2$. 
 Let $D^n\subset(M,\xi)$ be a Legendrian disc 
and $T^{n-2}\subset D^n$ a trivially embedded $(n-2)$-dimensional torus. 
 In other words, $T^{n-2}\subset D^n$ is unknotted 
(i.e.\ the boundary of a handlebody). 
 Note that the choice of $T^{n-2}\subset(M,\xi)$ is unique up to isotopy 
as isotropic submanifolds. 
 Let $U\subset D^n$ be a tubular neighborhood of $T^{n-2}$, 
which is diffeomorphic to $T^{n-2}\times D^2$. 
 According to this identification, set 
\begin{equation*}
  \tilde{U}:=T^{n-2}\times\lft(\inter D^2\setminus\{0\}\rgt)
  \subset D^n\subset(M,\xi).
\end{equation*}
 Then $\tilde{U}\subset(M,\xi)$ is a Legendrian submanifold 
diffeomorphic to $(0,1)\times T^{n-1}$ unique up to isotopy. 

  Now, we define rotation class for plastikstufe with toric core. 
 Let $\mathcal{P}_{T^{n-1}}$ be a small plastikstufe with toric core 
in a contact manifold $(M,\xi)$ of dimension~$2n+1>3$. 
%
% Definition. (rotation class)
%
\begin{df}\label{df:rtcl}
  The \emph{rotation class}\/ of the small plastikstufe $\mathcal{P}_{T^{n-1}}$ 
is the relative rotation class 
of the Legendrian embedding $f\colon(0,1)\times T^{n-1}\to(M,\xi)$ 
for a leaf ribbon of $\mathcal{P}_{T^{n-1}}$ 
with respect to the Legendrian embedding $g\colon(0,1)\times T^{n-1}\to(M,\xi)$ 
for the standard Legendrian submanifold $\tilde{U}$. 
 The small plastikstufe $\mathcal{P}_{T^{n-1}}$ is said 
to have \emph{trivial rotation}\/ if the rotation class vanishes. 
\end{df}

%%% Section 4. %%%%%%%%%%%%%%%%%%%%%%%%%%%%%%%%%%%%%%%%%%%%%%%%%%%%%%%%%%%%%%%%%
\section{Proof of Theorem~\ref{thma}}\label{sec:proof}
%%%%%%%%%%%%%%%%%%%%%%%%%%%%%%%%%%%%%%%%%%%%%%%%%%%%%%%%%%%%%%%%%%%%%%%%%%%%%%%%
  Theorem~\ref{thma} is proved in this section. 
 The ideas of the proof are debt to~\cite{muniplst}, \cite{boelmu}, 
and~\cite{camupr}. 
 We show the necessity (``if'' part) in Subsection~\ref{sec:OT2PS}, 
and the sufficiency (``only if'' part) in Subsection~\ref{sec:PS2OT}. 

%--- 4.1 -----------------------------------------------------------------------
\subsection{Overtwistedness implies the existence of a plastikstufe}
\label{sec:OT2PS}
  In this subsection, assuming the overtwistedness, 
we show the existence of a small plastikstufe 
with toric core whose rotation is trivial. 
 In other words, we show the following. 
%
% Proposition 4.1. -------------------------------------------------------------
%
\begin{prop}\label{prop:necess}
  Let $(M,\xi)$ be a contact manifold of dimension~$2n+1>3$. 
 If the contact structure $\xi$ is overtwisted, 
then there exists a small plastikstufe with toric core 
that has trivial rotation. 
\end{prop}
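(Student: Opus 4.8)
The plan is to reduce the toric-core statement to the spherical-core theorem (Theorem~A of Casals--Murphy--Presas, quoted above) by first producing a small plastikstufe with spherical core and trivial rotation inside an embedded ball, and then modifying its core from $S^{n-1}$ to $T^{n-1}$ in a controlled way that does not disturb triviality of the rotation. Since $\xi$ is overtwisted, Corollary~\ref{cor:isoctemb} gives a contact embedding into $(M,\xi)$ of any standard contact ball; equivalently, the spherical-core result supplies a small plastikstufe $\mathcal{P}_{S^{n-1}}\subset(M,\xi)$ with trivial rotation, living in an embedded ball $B\subset M$. By the standard neighborhood theorem for plastikstufes recalled in Subsection~\ref{sec:plastikstufe}, there is a contact embedding $\vphi_{\textup{PS}}\colon(\ups,\xi)\to(\R^3\times T^\ast S^{n-1},\xi_{\textup{PS}})$ with $\vphi_{\textup{PS}}(\mathcal{P}_{S^{n-1}})=D^2_\ot\times S^{n-1}_0$. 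So it suffices to work in this explicit model.

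Next I would perform the core surgery. The point is that in the model $\R^3\times T^\ast B$, the contact condition only constrains how the fibers $\{z\}\times B$ sit inside $\xi$ and how the slices $D^2\times\{b\}$ carry the overtwisted characteristic foliation; the diffeomorphism type of $B$ is free as long as $B$ is a closed orientable $(n-1)$-manifold and we can isotropically embed it. Concretely: inside the Legendrian leaf ribbon $(0,1)\times S^{n-1}$ of $\mathcal{P}_{S^{n-1}}$, which by trivial rotation is Legendrian-isotopic to $\inter D^n\setminus\{0\}$, I would locate a standardly (trivially, i.e.\ unknottedly) embedded isotropic $T^{n-1}$. More precisely, take the Legendrian disc $D^n$ and inside it an unknotted $T^{n-2}\subset D^n$ with tubular neighborhood $T^{n-2}\times D^2$; then $\tilde U=T^{n-2}\times(\inter D^2\setminus\{0\})\cong(0,1)\times T^{n-1}$ is exactly the standard Legendrian model $\tilde U$ of Subsection~\ref{sec:torictrivial}. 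Using Gromov's $h$-principle for subcritical isotropic embeddings (Theorem~\ref{thrm:g_h-pr}) applied to $T^{n-1}$ (which is $(n-1)$-dimensional in a $(2n+1)$-dimensional contact manifold, hence subcritical since $n-1<n$), I can realize $T^{n-1}$ as an isotropic submanifold tangent to $\xi$ and thicken it, inside the ball $B$, to a plastikstufe $\mathcal{P}_{T^{n-1}}$: take a small overtwisted disc $D^2_\ot$ transverse to this $T^{n-1}$, again using the model form $\alpha_\ot+\lambdast$ on a neighborhood $\R^3\times T^\ast T^{n-1}$. This produces a small plastikstufe with toric core whose leaf ribbon is, by construction, the submanifold $\tilde U$ (or an isotopic copy of it).

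Finally I would check triviality of the rotation. By Definition~\ref{df:rtcl}, the rotation class of $\mathcal{P}_{T^{n-1}}$ is the relative rotation class of its leaf ribbon with respect to the standard Legendrian $\tilde U\cong(0,1)\times T^{n-1}$. Since I built the leaf ribbon to be Legendrian-isotopic to $\tilde U$ itself (both living in the same ball, with the torus embedded unknottedly in a Legendrian disc and the $D^2$-direction coming from the overtwisted model), the two Legendrian embeddings $f$ and $g$ of $(0,1)\times T^{n-1}$ are Legendrian-isotopic, so the map $\vphi\colon(0,1)\times T^{n-1}\to\textup{GL}(n,\C)$ comparing $df^\C$ and $dg^\C$ is null-homotopic, and the rotation class vanishes.

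\medskip

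\noindent\textbf{Main obstacle.} The delicate point is the simultaneous control of two things during the core surgery: (i) that the torus $T^{n-1}$ we isotropically embed really is \emph{unknotted} in the ambient ball, i.e.\ sits in a Legendrian disc as the boundary of a handlebody, so that its thickening is genuinely the \emph{standard} $\tilde U$ and not some isotropically knotted variant; and (ii) that passing from the spherical leaf ribbon to this torus does not secretly introduce rotation. Both hinge on carefully exploiting the explicit model $\R^3\times T^\ast B$ together with the uniqueness (up to isotopy) statements for trivial Legendrian discs and for unknotted subcritical isotropic submanifolds furnished by Theorem~\ref{thrm:g_h-pr}; the bookkeeping of the complex trivialization of $\xi|_B$ under the core change is where the real work lies. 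An alternative route that sidesteps (ii) is to build $\mathcal{P}_{T^{n-1}}$ \emph{directly} in the standard contact ball $(\R^{2n+1},\xist)$ as an explicit model $D^2_\ot\times T^{n-1}$ with leaf ribbon $\tilde U$, verify trivial rotation there by an explicit trivialization, and then transport it into $(M,\xi)$ via the isocontact ball embedding of Corollary~\ref{cor:isoctemb}; I would likely present the argument this way, since triviality of rotation is then manifest in the model.
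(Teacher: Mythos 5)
Your overall strategy diverges from the paper's, and both of the routes you sketch break down at the same point: the production of the overtwisted-disc factor $D^2_\ot$ transverse to the toric core. In your main route, after locating an unknotted isotropic torus $T^{n-1}$ inside a Legendrian disc, you propose to ``thicken it to a plastikstufe: take a small overtwisted disc $D^2_\ot$ transverse to this $T^{n-1}$, again using the model form $\alpha_\ot+\lambdast$ on a neighborhood $\R^3\times T^\ast T^{n-1}$.'' But the isotropic neighborhood theorem says that a sufficiently small neighborhood of an isotropic torus is contactomorphic to the \emph{standard} (tight) model determined by its conformal symplectic normal bundle; it never carries $\ker(\alpha_\ot+\lambda_{T^\ast T^{n-1}})$ with $\alpha_\ot=\cos r\,dz+\sin r\,d\theta$ at small scale, and there is no ``small overtwisted disc'' to be found in a Darboux-small neighborhood. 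The existence of a contact embedding of the germ $(\mathcal{P}_{T^{n-1}},\zeta)$ of the overtwisted model along $D^2_\ot\times T^{n-1}_0$ is precisely the nontrivial content of Corollary~\ref{cor:possiblePS}, which rests on the Borman--Eliashberg--Murphy $h$-principle (Theorem~\ref{thrm:existh-prnc} via Corollary~\ref{cor:isoctemb}) together with the hypothesis that $T(T^{n-1})\otimes\C$ is trivial; it cannot be replaced by a local thickening of an isotropic torus. Your ``alternative route'' fails outright: no small plastikstufe with toric core and trivial rotation can exist in the standard tight ball $(\R^{2n+1},\xist)$ --- by the very statement being proved (or by Theorem~\ref{thrm:pstc2loose} combined with Proposition~\ref{prop:ls2ot}) its existence would force $\xist$ to be overtwisted, which is false. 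The model plastikstufe lives in $\lft(\R^3\times T^\ast T^{n-1},\ker(\alpha_\ot+\lambda_{T^\ast T^{n-1}})\rgt)$, not in the standard contact ball, and transporting it into $(M,\xi)$ requires overtwistedness of $(M,\xi)$, not just an embedding of a Darboux ball.

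For comparison, the paper's proof applies Corollary~\ref{cor:possiblePS} directly with $B=T^{n-1}$ (triviality of $T(T^{n-1})\otimes\C$ is immediate since $T(T^{n-1})$ is trivial), notes that the BEM construction places the image inside a Darboux chart so the resulting plastikstufe is small, and then verifies trivial rotation by tracing the formal bundle data of the embedding: each $T^\ast S^1$ factor is sent to $\C\setminus\{0\}$, so the image of the leaf ribbon $(0,1)\times S^1_0\times T^{n-2}_0$ becomes the standard $\tilde{U}=T^{n-2}\times(\inter D^2\setminus\{0\})$ of Subsection~\ref{sec:torictrivial}. Your detour through the spherical-core plastikstufe is unnecessary, and the ``core surgery'' it is meant to enable is exactly the step that is missing and, as stated, unobtainable.
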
 %--------------------------------------------------------------------

  The existence of a plastikstufe is discussed in~\cite{boelmu}
as an existence of a contact embedding of the model plastikstufe, 
which is defined as follows. 
 In other words, it is the same as the standard tubular neighborhood 
of a plastikstufe (see Subsection~\ref{sec:plastikstufe}). 
 Let $B$ be a closed manifold of dimension~$n-1$. 
 Then we have a contact manifold 
$\lft(\R^3\times T^\ast B, \ker(\alpha_{\ot}+\lambda_{T^\ast B})\rgt)$ 
of dimension~$2n+1$, 
where $\alpha_\ot=\cos rdz+\sin rd\theta$ 
is the standard overtwisted contact form on $\R^3$ 
with the cylindrical coordinates $(r,\theta,z)$,
and $\lambda_{T^\ast B}=\sum p_idq_i$ 
is the canonical Liouville form on $T^\ast B$ with coordinates $(q_i,p_i)$. 
 In this contact manifold, the submanifold $D^2_\ot\times B_0$ 
is a plastikstufe with core $B$, 
where $D^2_\ot\subset(\R^3,\ker\alpha_{\ot})$
is a simple overtwisted disc, 
and $B_0\cong B\subset T^\ast B$ is the zero-section. 
 Then let $(\mathcal{P}_B,\zeta)$ denote the pair of germs 
of $(2n+1)$-dimensional manifold and contact structure 
along the plastikstufe $D^2_\ot\times B_0\subset\R^3\times T^\ast B$.  
 It is called the \emph{model plastikstufe with core}\/ $B$. 
 The following is obtained as a corollary of Theorem~\ref{thrm:existh-prnc} 
via Corollary~\ref{cor:isoctemb}. 
%
% Corollary 4.2. ---------------------------------------------------------------
%
\begin{cor}[Borman, Eliashberg, Murphy~\cite{boelmu}]\label{cor:possiblePS}
  Let $(M,\xi)$ be an overtwisted contact manifold of dimension~$2n+1>3$, 
and $(\mathcal{P}_B,\zeta)$ the model plastikstufe with core $B$. 
 If the complexification $TB\otimes\C$ of the tangent bundle of $B$ is trivial,
then there exists a contact embedding of $(\mathcal{P}_B,\zeta)$ 
into $(M,\xi)$. 
\end{cor}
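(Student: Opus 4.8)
The plan is to deduce the statement from Corollary~\ref{cor:isoctemb} (itself a consequence of Theorem~\ref{thrm:existh-prnc}). Fix a thin open neighborhood $N\subset\R^3\times T^\ast B$ of the plastikstufe $D^2_\ot\times B_0$; then $(N,\zeta|_N)$ is an open contact manifold of dimension $2n+1$ representing the germ $(\mathcal{P}_B,\zeta)$, and it deformation retracts onto $D^2_\ot\times B_0\cong D^2\times B$, hence onto $B$. It suffices to produce a smooth embedding $f\colon N\hookrightarrow M$ together with a bundle monomorphism $\Phi\colon TN\to TM$ covering $f$ that fiberwise preserves contact hyperplanes and their conformal symplectic structure and satisfies $df\simeq\Phi$ through bundle monomorphisms: Corollary~\ref{cor:isoctemb} then gives an isocontact embedding $(N,\zeta|_N)\hookrightarrow(M,\xi)$, and the image of $D^2_\ot\times B_0$ is a contact copy of the model plastikstufe in $(M,\xi)$.

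The hypothesis is used to trivialize the bundles over $N\simeq B$ that carry the formal data. Since $TB\otimes\C$ is trivial, so is its underlying real bundle $TB\oplus TB$; therefore $T(T^\ast B)\cong\pi^\ast(TB\oplus T^\ast B)$ is trivial, whence $TN$ is trivial and $N$ is parallelizable. Likewise $\zeta|_N$ splits, as a symplectic vector bundle, into the pull-back of $\xi_\ot$ over the contractible disc $D^2_\ot$ (symplectically trivial) plus the canonical symplectic bundle associated with $T^\ast B$, which is symplectically trivial precisely because $TB\otimes\C$ is; hence $\zeta|_N$ is a trivial rank-$2n$ symplectic bundle.

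To build $f$, embed $D^2\times B$ into a Darboux ball $\mathcal{O}\subset M$; this is possible because $B$ embeds in $\R^{2n-2}$ by Whitney's theorem, so $D^2\times B\hookrightarrow\R^{2n}\subset\mathcal{O}$. Over the contractible $\mathcal{O}$ the normal bundle of the image is stably $\underline{\R}^{2n+1}\ominus T(D^2\times B)$, and triviality of $TB\oplus TB$ identifies this stably with $\underline{\R}\oplus T^\ast B$; being a rank-$n$ bundle over the $(n-1)$-dimensional complex $B$, the stable isomorphism refines to an honest one, so the embedding of $D^2\times B$ extends, through the tubular neighborhood theorem, to an embedding $f\colon N\hookrightarrow M$ with $f(N)\subset\mathcal{O}$. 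For $\Phi$, choose a conformal symplectic bundle isomorphism $\psi\colon\zeta|_N\to f^\ast\xi$ covering $f$ (both bundles are trivial, the target one since $f(N)$ sits in the contractible $\mathcal{O}$) and set $\Phi:=\psi\oplus\mathrm{id}$ with respect to the Reeb splittings $TN=\zeta|_N\oplus\langle R\rangle$ and $TM|_{\mathcal{O}}=\xi\oplus\langle R\rangle$; this $\Phi$ is fiberwise isocontact and covers $f$.

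The point that I expect to demand the most care is the verification $df\simeq\Phi$ through bundle monomorphisms. Both are fiberwise isomorphisms $TN\to f^\ast TM$ between trivial bundles over $N\simeq B$, hence, after trivializing, maps $N\to\textup{GL}(2n+1,\R)$ whose homotopy classes must be brought into agreement. There are two sources of flexibility: composing $\psi$ with a gauge transformation of $\zeta|_N$ alters $\Phi$ by an arbitrary map $N\to U(n)$, and, $N$ being an open parallelizable manifold, the Gauss map of the equidimensional embedding $f$ of $N$ into the ball $\mathcal{O}$ may be modified within its smooth isotopy class. Combining these should adjust $[df]$ and $[\Phi]$ until they coincide; once this homotopy is secured, Corollary~\ref{cor:isoctemb} delivers the contact embedding of $(\mathcal{P}_B,\zeta)$ into $(M,\xi)$.
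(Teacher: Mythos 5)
Your overall strategy coincides with the paper's: the paper presents this statement as a consequence of Theorem~\ref{thrm:existh-prnc} via Corollary~\ref{cor:isoctemb}, and in the proof of Proposition~\ref{prop:necess} it recalls that the embedding in \cite{boelmu} is obtained from a formal isocontact bundle map on $T(\R^3\times T^\ast B)$ composed with a Darboux chart, the triviality of $TB\otimes\C$ being exactly what makes that formal map exist. Your use of the hypothesis --- $TB\otimes\C$ trivial forces $\zeta|_N$ to be a trivial conformal symplectic bundle and $N$ to be parallelizable --- is correct and matches this, as does the reduction to producing a pair $(f,\Phi)$ with $df\simeq\Phi$ inside a Darboux ball.

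The gap is the step you yourself flag at the end: the homotopy $df\simeq\Phi$ through bundle monomorphisms is never established, and the two ``sources of flexibility'' you invoke do not suffice as stated. Modifying $f$ within its smooth isotopy class does not change the homotopy class of $df$ at all (an isotopy of embeddings induces a homotopy of their differentials), so that freedom contributes nothing; to genuinely alter $[df]$ you would need the h-principle for equidimensional immersions of the open manifold $N$ into $\mathcal{O}$, followed by a general-position perturbation to an embedding near the core $B$, and neither ingredient appears in your argument. Composing $\psi$ with gauge transformations of $\zeta|_N$ only realizes classes in the image of $[B,U(n)]\to[B,SO(2n+1)]$, which is not surjective for a general $(n-1)$-dimensional $B$ (already $\pi_8(U)=0$ while $\pi_8(SO)=\Z/2$, and $B=S^8$ satisfies the hypothesis of the corollary). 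Since, once $f$ is fixed by a Whitney embedding, the discrepancy between $[df]$ and $[\Phi]$ is an a priori arbitrary class in $[B,SO(2n+1)]$, the matching is precisely the nontrivial content of the statement and cannot be left as ``should adjust until they coincide.'' The fix is to reverse the order of construction, as in the argument the paper recalls from \cite{boelmu}: first build the fiberwise isocontact monomorphism $F\colon TN\to T\R^{2n+1}$ from the trivializations, and only then produce an embedding $f$ with $df$ homotopic to $F$ by an h-principle, rather than fixing $f$ first and trying to deform $\Phi$ onto $df$.
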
 %---------------------------------------------------------------------

  Now, assuming the overtwistedness, we show the existence 
of a small plastikstufe with toric core that has trivial rotation. 

\begin{proof}[Proof of Proposition~\ref{prop:necess}]
  Let $(M,\xi)$ be an overtwisted contact manifold of dimension~$2n+1>3$, 
  % We may concentrate on a Darboux chart of it.
and $(\mathcal{P}_{T^{n-1}},\zeta)$ the model plastikstufe 
with toric core $T^{n-1}$. 
 As the tangent bundle $T(T^{n-1})$ is trivial, 
its complexification $T(T^{n-1})\otimes\C$ is also trivial.
 Then, by Corollary~\ref{cor:possiblePS}, 
the model plastikstufe $(\mathcal{P}_{T^{n-1}},\zeta)$ can be embedded 
into $(M,\xi)$. 
 In the proof of Corollary~\ref{cor:possiblePS} in~\cite{boelmu}, 
the embedding is constructed by $h$-principle using the Darboux chart.
 Therefore, $(\mathcal{P}_{T^{n-1}},\zeta)$ is embedded into an open ball 
in $(M,\xi)$. 
 Then the plastikstufe is small. 

  It remains to show that the embedded plastikstufe has trivial rotation. 
 In order to discuss the rotation class of the embedded plastikstufe, 
we should observe the embedded image of a leaf ribbon of the model plastikstufe.
 Then, we should review the construction of the contact embedding 
of the model plastikstufe 
in the proof of Corollary~\ref{cor:possiblePS} in~\cite{boelmu}. 
 The contact embedding of the model plastikstufe 
$(\mathcal{P}_{T^{n-1}},\zeta)$ into $(M,\xi)$ is constructed 
by \textit{h}-principle from two contact bundle-mappings: 
\begin{align*}
  \Psi\colon&\lft(T(\R^3\times T^\ast T^{n-1}), 
              \ker(\alpha_\ot+\lambda_{T^\ast T^{n-1}})\rgt)\to
              \lft(T(\R^3\times T^\ast T^{n-1}), 
              \ker(\alpha_\textup{st}+\lambda_{T^\ast T^{n-1}})\rgt), \\
  \Phi\colon&\lft(T(\R^3\times T^\ast T^{n-1}), 
              \ker(\alpha_\textup{st}+\lambda_{T^\ast T^{n-1}})\rgt)\to
              \lft(T\R^{2n+1},\xi_\textup{st}\rgt), 
\end{align*}
where $\alpha_\textup{st}=d\phi+\sum r^2d\theta$ 
is the standard contact form on $\R^3$, 
and $\xi_\textup{st}=\ker\lft\{d\phi+\sum r_i^2d\theta_i\rgt\}$ 
is the standard contact structure on $\R^{2n+1}$. 
 Note that 
the first bundle-mapping is on the identity of $\R^3\times T^\ast T^{n-1}$, 
and that the second one is on the mapping 
$\R^3\times T^\ast T^{n-1}=\R^3\times(T^\ast S^1)^{n-1}
\to\R^3\times\C^{n-1}=\R^{2n+1}$. 
 In general the second one is constructed, 
under the condition that $TQ\otimes\C$ is trivial, by using Gromov's 
\textit{h}-principle. 
 In this case, it is constructed from the explicit mapping 
from $T^\ast S^1$ to $\C\setminus\{0\}$. 
% $\mathcal{P}_{T^{n-1}}=D_\ot\times T_0^{n-1}\subset\R^3\times T^\ast T^{n-1}$, 
% where $T_0^{n-1}\subset T^\ast T^{n-1}$ is the zero-section. 
 From these $\Phi$, $\Psi$, and and the Darboux chart, 
we have a contact bundle homomorphism 
$\lft(T(\R^3\times T^\ast T^{n-1}), \ker(\alpha_\ot+\lambda_{T^\ast T^{n-1}})\rgt)
\to(TM,\xi)$. 
 Then, by Corollary~\ref{cor:isoctemb}, we have a contact embedding 
of $\lft(\R^3\times T^\ast T^{n-1},\ker\{\alpha_\ot+\lambda_{T^\ast T^{n-1}}\}\rgt)$ 
into $(M,\xi)$ that is isotopic to the mapping between the bases 
of the bundle homomorphisms above. 
 The contact embedding of the model plastikstufe is obtained 
as a restriction of the mapping 
to the model plastikstufe 
$\mathcal{P}_{T^{n-1}}=D_\ot^2\times T_0^{n-2}
\subset\R^3\times T^\ast T^{n-1}$. 

  By observing the contact embedding above, 
it is proved as follows that the embedded model plastikstufe 
has trivial rotation. 
 We show that an embedded leaf ribbon is the standard Legendrian submanifold 
diffeomorphic to $(0,1)\times T^{n-1}$ 
in the sense of Subsection~\ref{sec:torictrivial}. 
 The model plastikstufe $\mps=D_\ot^2\times T_0^{n-1}$ 
in $\lft(\R^3\times T^\ast T^{n-1},\ker\{\alpha_\ot+\lambda_{T^\ast T^{n-1}}\}\rgt)$
can be regarded as 
% \begin{align*}
%   &\mps \\
%   =&D_\ot^2\times S_0^1\times T_0^{n-2}
%      \qquad\qquad \subset\lft(\R^3\times T^\ast S^1\times T^\ast T^{n-2}, 
%      \ker\{\alpha_\ot+\lambda_{T^\ast S^1}+\lambda_{T^\ast T^{n-2}}\}\rgt), \\
%   =&D_\ot^2\times S_0^1\times(S_0^1\times\dots\times S_0^1)
%      \subset\lft(\R^3\times T^\ast S^1\times\dots\times T^\ast S^1, 
%      \ker\{\alpha_\ot+\lambda_{T^\ast S^1}+\dots+\lambda_{T^\ast S^1}\}\rgt). 
% \end{align*}
\begin{equation*}
\begin{array}{ll}
  \quad\mps&  \\
  =D_\ot^2\times S_0^1\times T_0^{n-2}
           &\subset\lft(\R^3\times T^\ast S^1\times T^\ast T^{n-2}, 
             \ker\{\alpha_\ot+\lambda_{T^\ast S^1}
             +\lambda_{T^\ast T^{n-2}}\}\rgt), \\
  =D_\ot^2\times S_0^1\times(S_0^1\times\dots\times S_0^1)
           &\subset\lft(\R^3\times T^\ast S^1\times\dots\times T^\ast S^1, 
             \ker\{\alpha_\ot+\lambda_{T^\ast S^1}+\dots
             +\lambda_{T^\ast S^1}\}\rgt). 
\end{array}
\end{equation*}
where $S_0^1\subset T^\ast S^1$ and $T_0^{n-2}\subset T^\ast T^{n-2}$ 
are zero-sections. 
 Then a leaf ribbon is 
$(0,1)\times S_0^1\times T_0^{n-2}\subset \mps$, 
where $(0,\epsilon)\subset D_\ot^2$ is an open segment 
on a leaf of the characteristic foliation on $D_\ot^2$ 
(see Figure~\ref{fig:3otdiscs}). 
 We should recall that each factor $T^\ast S^1$ is mapped 
to $\C\setminus\{0\}\subset\C$ by the mapping $\Phi$ above. 
 Then the image of $T^{n-2}$ is unknotted. 
 Similarly, the image of $(0,1)\times S_0^1$ is isotopic to 
$\inter D^2\setminus\{0\}\subset\C\setminus\{0\}\subset\C$. 
 Then we conclude, after applying the \textit{h}-principle, 
that the embedded leaf ribbon is the standard Legendrian submanifold 
in $(M,\xi)$. 
 In other words, the embedded model plastikstufe has trivial rotation. 
\end{proof}

%--- 4.2 -----------------------------------------------------------------------
\subsection{The existence of a plastikstufe implies overtwistedness}
\label{sec:PS2OT}
  In this subsection, assuming the existence of a small plastikstufe 
with toric core and trivial rotation, we show the overtwistedness. 
 The proof is largely debt to the relation between Loose Legendrian submanifold 
and overtwistedness, Proposition~\ref{prop:ls2ot} due to~\cite{camupr}. 
 The main contribution of this paper is the relation 
between plastikstufe with toric core and loose Legendrian submanifolds, 
Theorem~\ref{thrm:pstc2loose} bellow. 
 Combining these results, we obtain the main issue Proposition~\ref{prop:suff} 
of this subsection in Subsubsection~\ref{sec:pfps2pt}. 

%--- 4.2.1.
\subsubsection{Plastikstufe to loose Legendrian}\label{sec:ps2ll}
  We prove the following claim on a relation 
between plastikstufes with toric core and loose Legendrian submanifolds. 
%
% Theorem 4.3. -----------------------------------------------------------------
%
\begin{thrm}\label{thrm:pstc2loose}
   Let $(M,\xi)$ be a contact manifold of dimension~$2n+1>3$. 
 Suppose that there exists a small plastikstufe $\mathcal{P}\subset(M,\xi)$ 
with toric core and trivial rotation. 
 Then any Legendrian submanifold which is disjoint from $\mathcal{P}$ is loose. 
\end{thrm}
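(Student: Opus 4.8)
The strategy is to reduce the toric case to the spherical case via a surgery/connected-sum argument inside the plastikstufe, using the $3$-dimensional model (Theorem~\ref{thrm:slidingOTD}) fiberwise over the core. Recall that a plastikstufe $\mathcal{P}=\mathcal{P}_{T^{n-1}}$ with toric core is, near its standard neighborhood $\ups$, contactomorphic to a piece of $(\R^3\times T^\ast T^{n-1},\ker(\alpha_\ot+\lambdast))$ with $\mathcal{P}$ going to $D_\ot^2\times T^{n-1}_0$. Writing $T^{n-1}=S^1\times T^{n-2}$ and splitting $T^\ast T^{n-1}=T^\ast S^1\times T^\ast T^{n-2}$, the contact form becomes $\alpha_\ot+\lambda_{T^\ast S^1}+\lambda_{T^\ast T^{n-2}}$. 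The key observation is that the first two factors $\R^3\times T^\ast S^1$ with form $\alpha_\ot+\lambda_{T^\ast S^1}$ already contain, over each point of $S^1$, a copy of the $3$-dimensional overtwisted disc $D_\ot^2$; one wants to perform a Legendrian handle attachment (a $1$-surgery) along a leaf ribbon direction so that the toric leaf ribbon $(0,1)\times S^1\times T^{n-2}$ is converted, after a Legendrian isotopy and a connected sum with $\partial D_\ot^2$ in the $S^1$-factor, into a punctured-disc leaf ribbon $(0,1)\times S^{n-1}$. Equivalently: inside the plastikstufe neighborhood we can find a \emph{small plastikstufe with spherical core and trivial rotation}, and then invoke the already-proven spherical result (the Proposition of Murphy--Niederkr\"uger--Plamenevskaya--Stipsicz quoted just before Subsection~\ref{sec:torictrivial}).

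Concretely I would proceed in the following steps. \textbf{Step 1.} Take $\Lambda\subset(M,\xi)$ a Legendrian submanifold disjoint from $\mathcal{P}$; since $\mathcal{P}$ is small it lies in a ball $B$, and after a contact isotopy we may assume $\Lambda$ lies outside a slightly larger ball containing the standard neighborhood $\ups$ of $\mathcal{P}$. \textbf{Step 2.} Work inside $\ups\cong$ a neighborhood of $D_\ot^2\times S^1_0\times T^{n-2}_0$ in $\R^3\times T^\ast S^1\times T^\ast T^{n-2}$. Using Theorem~\ref{thrm:slidingOTD} in the $\R^3$-factor: the boundary circle $\partial D_\ot^2$ is a Legendrian unknot whose negative stabilization is trivial, and connected-summing with it destabilizes. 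Performing this fiberwise over $S^1_0\times T^{n-2}_0$, I construct inside $\ups$ a small plastikstufe $\mathcal{P}'$ whose core is obtained from $T^{n-1}$ by a surgery in the $S^1$-direction, i.e.\ a sphere $S^{n-1}$ --- this is the standard trick of replacing a torus factor $S^1$ by gluing in a disc via the destabilization move, one uses that $T^{n-2}\subset D^n$ (the standard leaf ribbon model of Subsection~\ref{sec:torictrivial}) is unknotted. \textbf{Step 3.} Verify that $\mathcal{P}'$ has trivial rotation: the hypothesis that $\mathcal{P}$ has trivial rotation says its leaf ribbon $f\colon(0,1)\times T^{n-1}\to(M,\xi)$ is formally homotopic to the standard $\tilde{U}=T^{n-2}\times(\inter D^2\setminus\{0\})$; the surgery move only changes things in the well-understood $3$-dimensional factor where the rotation is tracked explicitly via $\partial D_\ot^2$ (whose contribution to the relative rotation class is zero, by the computation of Murphy--Niederkr\"uger et al.), so the rotation class of $\mathcal{P}'$ is the image of a vanishing class under the surgery map on $[\,\cdot\,,U(n)]$, hence still vanishes. \textbf{Step 4.} Apply the spherical-core Proposition (quoted before Subsection~\ref{sec:torictrivial}) to $\mathcal{P}'$: since $\Lambda$ is disjoint from $\ups\supset\mathcal{P}'$, it is loose.

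The main obstacle is \textbf{Step 2--3}: making the fiberwise connected-sum/destabilization precise as a contact-geometric operation that (a) actually produces an embedded plastikstufe with the claimed spherical core, and (b) is controlled well enough in the complex-trivialization bundle language of Definition~\ref{df:rtcl} that the rotation class can be tracked. The delicate point is that ``rotation class'' is defined with respect to different standard models in the spherical and toric cases ($\inter D^n\setminus\{0\}$ versus $\tilde U=T^{n-2}\times(\inter D^2\setminus\{0\})$), so one must check that the surgery move identifies these two standard models compatibly --- i.e.\ that the standard toric leaf ribbon, after the destabilization in the $S^1$-factor, becomes the standard spherical leaf ribbon, not some nontrivially rotated version of it. This is where Theorem~\ref{thrm:slidingOTD}'s second conclusion (that $L$ and $\tilde L$ are \emph{Legendrian isotopic}, not merely formally homotopic) is essential, since it lets us replace formal data by honest Legendrian isotopies and keep the rotation bookkeeping trivial throughout. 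Everything else --- the reduction using smallness, the final appeal to the spherical result --- is routine.
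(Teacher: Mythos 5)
Your plan diverges from the paper's argument and contains a genuine gap at its center. First, the topology of Step~2 does not work: surgering out the $S^1$-factor of $T^{n-1}=S^1\times T^{n-2}$ (removing $S^1\times D^{n-2}$ and gluing in $D^2\times S^{n-3}$) does not produce $S^{n-1}$ once $n\ge4$; the result still contains $S^1\times(T^{n-2}\setminus D^{n-2})$ and has nontrivial fundamental group, so there is no single ``destabilization in the $S^1$-direction'' converting the toric core into a spherical one. Second, and more fundamentally, Theorem~\ref{thrm:slidingOTD} is a statement about Legendrian curves (and, applied fiberwise, about Legendrian submanifolds of product form); it gives no recipe for modifying the plastikstufe itself, which is a $2n$-dimensional non-Legendrian object whose defining data (the singular foliation on each $D^2$-slice, the $\xi$-tangency of the core fibers) would all have to be reconstructed after your proposed handle attachment. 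Producing an embedded spherical-core plastikstufe from a toric one is essentially as hard as the theorem you are trying to prove, and the paper does not attempt it.

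The paper instead manipulates the Legendrian $\Lambda$, not the plastikstufe. Its key step (Lemma~\ref{lem:key}) uses the trivial-rotation hypothesis to build a formal homotopy between the standard toric ribbon $\tilde{U}$ sitting inside a Legendrian disc of $\Lambda$ and a leaf ribbon of $\mathcal{P}$; the Smale--Hirsch theorem and Gromov's \textit{h}-principle for subcritical isotropic embeddings then upgrade this formal data to a genuine Legendrian isotopy carrying a piece $\Lambda_0\cong(0,1)\times T^{n-1}$ of $\Lambda$ into standard position $I\times T_0^{n-1}$ inside the standard neighborhood $\R^3\times T^\ast T^{n-1}$ of $\mathcal{P}$. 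Only then is Theorem~\ref{thrm:slidingOTD} applied, fiberwise over $T_0^{n-1}$, to the segment $I$: sliding across $D^2_\ot$ exhibits $I$ as a negative stabilization of some $\tilde{I}$, and after rescaling (so that $\rho/\epsilon>1$) this stabilization spread over $V_\rho\subset T^\ast T^{n-1}$ is literally a loose chart for $\Lambda$. No spherical-core plastikstufe and no appeal to the Murphy--Niederkr\"uger--Plamenevskaya--Stipsicz proposition is needed. If you want to salvage your outline, the piece to keep is the fiberwise use of Theorem~\ref{thrm:slidingOTD}; but it must be aimed at a toric ribbon inside $\Lambda$, and the real work --- which your plan omits --- is the \textit{h}-principle argument that puts such a ribbon into the model position using the trivial-rotation hypothesis.
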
 %--------------------------------------------------------------------

  In order to prove this theorem, we need the following lemma. 
 It is a key to the proof of Theorem~\ref{thrm:pstc2loose}. 
 For the statement, we recall that a plastikstufe ${\cal P}$ 
in a contact manifold $(M,\xi)$ has the standard tubular neighborhood 
$\vphi_{\textup{PS}}\colon(U_{\textup{PS}},\xi)
\to\lft(\R^3\times T^\ast T^{n-1},\xi_{\textup{PS}}\rgt)$ 
for which $\vphi_{\textup{PS}}({\cal P})=D_\ot^2\times T_0^{n-1}$, 
where $T_0^{n-1}\subset T^\ast T^{n-1}$ is the zero section 
(see Subsection~\ref{sec:plastikstufe}). 
%
% Lemma 4.4. -------------------------------------------------------------------
%
\begin{lem}\label{lem:key}
  Let $(M,\xi)$ be a contact manifold of dimension~$2n+1$, 
and $\mathcal{P}\subset(M,\xi)$ 
a small plastikstufe with toric core that has trivial rotation. 
 Then, for any Legendrian submanifold $\Lambda\subset(M,\xi)$ 
disjoint from the plastikstufe $\mathcal{P}$, 
there exist a Legendrian submanifold $\Lambda_0\subset\Lambda$ 
diffeomorphic to $(0,1)\times T^{n-1}$ 
and an ambient contact isotopy 
\begin{equation*}
  \varphi\colon(M,\xi)\times[0,1]\to(M,\xi)
\end{equation*}
that satisfy the following conditions\textup{:} 
setting $\varphi_t(\ \cdot\ ):=\varphi(\ \cdot\ ,t)$, 
\begin{itemize} \setlength{\parskip}{0cm} \setlength{\itemsep}{0cm}
\item $\varphi_t=\textup{id}$ near $\mathcal{P}$ for any $t\in[0,1]$, 
\item $\varphi_0=\textup{id}$, 
\item $\varphi_1(\Lambda_0)$ lies in a standard tubular neighborhood 
  $U_{\textup{PS}}\subset(M,\xi)$ of the plastikstufe $\mathcal{P}$, and 
  is diffeomorphic to $(0,1)\times T_0^{n-1}$ by 
  $\vphi_{\textup{PS}}\colon(U_{\textup{PS}},\xi)
  \to\lft(\R^3\times T^\ast T^{n-1},\xi_\textup{PS}\rgt)$, 
  where $(0,1)\subset(\R^3,\xi_\ot)$ is a Legendrian segment 
  on an extended overtwisted disc, 
  and $T_0^{n-1}\subset T^\ast T^{n-1}$ is the zero section. 
\end{itemize}
\end{lem}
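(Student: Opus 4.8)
The plan is to reduce the higher-dimensional statement to the $3$-dimensional sliding move of Theorem~\ref{thrm:slidingOTD}, performed $T^{n-1}$-parametrically. First I would work inside the standard tubular neighborhood $\ups$ of $\mathcal{P}$, identified via $\vphi_{\textup{PS}}$ with $(\R^3\times T^\ast T^{n-1},\xi_{\textup{PS}})$, where $\xi_{\textup{PS}}=\ker(\alpha_\ot+\lambdast)$ and $\mathcal{P}=D^2_\ot\times T^{n-1}_0$. The trivial-rotation hypothesis, via Definition~\ref{df:rtcl}, says precisely that a leaf ribbon of $\mathcal{P}$ is Legendrian-isotopic (rel nothing, within an ambient ball) to the standard $\tilde U=T^{n-2}\times(\inter D^2\setminus\{0\})$ sitting inside a Legendrian disc $D^n$; this is the input that lets me recognize the relevant pieces as ``unknotted.'' Since $\Lambda$ is disjoint from $\mathcal{P}$, I may shrink it near a point and, using that all Legendrian discs are isotopic (Subsection~\ref{sec:rotdef}) together with the $h$-principle for subcritical isotropic embeddings (Theorem~\ref{thrm:g_h-pr}), isotope a small sub-ribbon $\Lambda_0\subset\Lambda$ diffeomorphic to $(0,1)\times T^{n-1}$ so that it lies in $\ups$ as a graph over a thin shell $(0,1)\times T^{n-1}_0$ in the zero-section directions, avoiding $\mathcal{P}$ itself.

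The core step is then the parametrized sliding move. Decompose $T^{n-1}_0 = T^{n-2}_0\times S^1_0$, matching the splitting used in Subsection~\ref{sec:torictrivial}. For each point $b\in T^{n-2}_0$, the slice is a situation inside $(\R^3\times T^\ast S^1, \ker(\alpha_\ot+\lambda_{T^\ast S^1}))$: an $S^1$-family of $3$-dimensional pictures in which we have an overtwisted disc $D^2_\ot$ and a disjoint Legendrian arc (the slice of $\Lambda_0$). Theorem~\ref{thrm:slidingOTD} says that taking the Legendrian connected sum of that arc with $\rd D^2_\ot$ produces a negative destabilization, and crucially that the arc and its stabilized partner are Legendrian-\emph{isotopic}. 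I would carry out this connected-sum-and-isotopy construction continuously in the $S^1$- and $T^{n-2}$-parameters; the upshot is an ambient contact isotopy, supported in a neighborhood of a thin annular region between $\Lambda_0$ and $\mathcal{P}$ and equal to the identity on $\op(\mathcal{P})$, after which $\varphi_1(\Lambda_0)$ is repositioned so that, slice by slice, it traces the segment $(0,1)$ on an \emph{extended} overtwisted disc in the $\R^3$-factor and the zero section $T^{n-1}_0$ in the cotangent directions — i.e.\ exactly $(0,1)\times T^{n-1}_0$ under $\vphi_{\textup{PS}}$. Extending $\varphi$ by the identity outside $\ups$ gives the global contact isotopy on $(M,\xi)$ with $\varphi_0=\textup{id}$, which is what is asserted.

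The main obstacle I anticipate is the compatibility and smoothness of the family of sliding moves over $b\in T^{n-2}_0$ together with the $S^1$-direction: Theorem~\ref{thrm:slidingOTD} is stated for a single Legendrian knot, so I must check that the connected-sum and the ensuing ambient isotopy can be chosen to depend smoothly (and, on $S^1$, periodically) on the parameters, and that the supports can be kept mutually disjoint and disjoint from $\mathcal{P}$. This is where the trivial-rotation hypothesis does the real work: it guarantees that the formal (complex-bundle-theoretic) obstruction to matching $\Lambda_0$ with the standard $(0,1)\times T^{n-1}_0$ vanishes, so that after the geometric slicewise move one is left only with a \emph{formally trivial} discrepancy, which Gromov's $h$-principle (Theorem~\ref{thrm:g_h-pr}) for the subcritical isotropic pieces — and the Legendrian isotopy clause of Theorem~\ref{thrm:slidingOTD} in the top-dimensional direction — upgrades to an actual Legendrian isotopy, hence to the ambient contact isotopy $\varphi$.
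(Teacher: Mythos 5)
There is a genuine gap: you have imported the wrong key tool into this lemma. The parametrized application of Theorem~\ref{thrm:slidingOTD} (Legendrian connected sum with $\rd D^2_\ot$, slice by slice over $T^{n-1}$) is the engine of the \emph{next} step, the proof of Theorem~\ref{thrm:pstc2loose}; it plays no role in the paper's proof of Lemma~\ref{lem:key}, and it cannot play the role you assign it here. The sliding move only makes sense once the Legendrian strip already sits inside $\ups$ in product form $I\times T_0^{n-1}$ relative to the splitting $\R^3\times T^\ast T^{n-1}$ — but producing exactly that product position is the entire content of the lemma. Your proposal assumes it in the phrase ``isotope a small sub-ribbon $\Lambda_0\subset\Lambda$ \dots so that it lies in $\ups$ as a graph over a thin shell,'' which is precisely the hard part and is left unproved.

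What the paper actually does to close that gap has two components that are absent from your plan. First, Gromov's \textit{h}-principle (Theorem~\ref{thrm:g_h-pr}) applies only to the \emph{subcritical} core torus $T^{n-1}$, and it needs a formal isotopy as input; the paper constructs this formal isotopy explicitly from the complexified differentials $df_0^\C$, $df_1^\C$, using the trivial-rotation hypothesis to make the comparison map $\psi\colon T^{n-1}\to\textup{GL}(n,\C)$ null-homotopic, and then the Smale--Hirsch theorem (Theorem~\ref{thrm:shimmthm}) to realize the resulting Lagrangian monomorphism by an honest embedding $T^{n-1}\times[0,1]\to M$. Second, the resulting isotopy of core tori must be thickened back to an isotopy of Legendrian ribbons $(0,1)\times T^{n-1}$; the paper does this with a family of framings $X_t$ in the conformal symplectic normal bundle, and only then invokes the ambient isotopy theorem to get $\vphi_t$. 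Your appeal to ``the Legendrian isotopy clause of Theorem~\ref{thrm:slidingOTD} in the top-dimensional direction'' does not substitute for this framing argument: that theorem is a statement about knots in contact $3$-manifolds and provides no mechanism for extending a subcritical isotropic isotopy to a Legendrian one in dimension $2n+1>3$. You correctly identify where the trivial-rotation hypothesis must enter, but without the Smale--Hirsch/formal-isotopy step and the normal-framing extension the argument does not go through.
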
 %---------------------------------------------------------------------
\begin{proof}
  First of all, we arrange the situation for this proof. 
 A candidate of $\Lambda_0$ is given as follows. 
 Since the plastikstufe $\mathcal{P}\subset(M,\xi)$ is small, 
there exists an open ball $U\subset(M,\xi)$ that includes $\mathcal{P}$. 
 We may assume that $U$ intersects with the given Legendrian submanifold 
$\Lambda\subset(M,\xi)$. 
 Taking a Legendrian disc $D^n_L\subset\Lambda\cap U\subset(M,\xi)$, 
we have, in $D_L^n$, the standard Legendrian submanifold 
diffeomorphic to $(0,1)\times T^{n-1}$
by the construction in Subsection~\ref{sec:torictrivial}. 
 Let $\Lambda_0$ denote the Legendrian submanifold, 
and $f_0\colon (0,1)\times T^{n-1}\to(M,\xi)$ 
be the corresponding Legendrian embedding: $\operatorname{Im}f_0=\Lambda_0$. 
 On the other hand, fix a Legendrian strip $\Lambda_1\subset(M,\xi)$, 
diffeomorphic to $(0,1)\times T^{n-1}$, 
which is isotopic to a leaf ribbon of $\mathcal{P}$ 
but disjoint from $\mathcal{P}$. 
 Let $f_1\colon (0,1)\times T^{n-1}\to(M,\xi)$ 
be a corresponding Legendrian embedding. 
 We are going to connect these two Legendrian embeddings $f_0$, $f_1$ 
by a path of Legendrian embeddings. 
 And then it is extended to an ambient contact isotopy. 

  The proof is divided into the following three steps. 
 We first concentrate on the core-tori of Legendrian submanifolds 
diffeomorphic to $(0,1)\times T^{n-1}$, 
that are subcritical isotropic submanifolds. 
 In Step~1, we connect the restrictions of $f_0$ and $f_1$ to the core-tori 
by a path of subcritical isotropic embeddings. 
 Then, in Step~2, we extend it to a path of Legendrian embeddings. 
 And then it is extended to an ambient contact isotopy in Step~3. 

% Step 1. 
%
\noindent\underline{\textbf{Step~1:}}\ %---
  We will find the path of subcritical isotropic embeddings 
using Gromov's \textit{h}-principle for subcritical isotropic embeddings 
(Theorem~\ref{thrm:g_h-pr}). 
 In order to apply Gromov's \textit{h}-principle, 
we should construct a formal isotopy. 
 Further, the needed formal isotopy is obtained 
by using The Smale-Hirsch immersion theorem (Theorem~\ref{thrm:shimmthm}). 

¡¡What we should do first is to construct a formal monomorphism 
for Theorem~\ref{thrm:shimmthm}. 
 Setting $f_i^{\textup{cr}}:=f_i|_{T^{n-1}\times\{c\}}$, $i=0,1$, for some $c\in(0,1)$,
we have two subcritical isotropic embeddings of a torus $T^{n-1}$: 
\begin{equation*}
  f_i^{\textup{cr}}\colon T^{n-1}\to (M,\xi). 
\end{equation*}
 We extend these embeddings to a certain embedding 
$T^{n-1}\times[0,1]\to(M,\xi)$ by Theorem~\ref{thrm:shimmthm}. 
 To apply it, what we need is a formal monomorphism 
$T(T^{n-1}\times[0,1])\to TM$. 
 We construct a Lagrangian monomorphism as a real part of a complexification 
as follows. 

  First, we extend from the both ends a little. 
 As restrictions of $f_0$ and $f_1$, we have two Legendrian embeddings 
\begin{equation*}
  f_0\colon T^{n-1}\times[0,\delta]\to(M,\xi), \qquad
  f_1\colon T^{n-1}\times[1-\delta,1]\to(M,\xi),
\end{equation*}
reparameterizing $(0,1)$ so that $f_i|_{T^{n-1}\times\{i\}}=f_i^{\textup{cr}}$, 
$i=0,1$, for some small $\delta>0$. 
 Taking complexifications of tangent bundles 
and complex trivialization of $\xi$ on $U$ 
(see Subsection~\ref{sec:rotdef}), 
we have the following bundle mappings 
\begin{equation}
  df_0^\C\colon T(T^{n-1}\times[0,\delta])\otimes\C\to\C^n, \qquad 
  df_1^\C\colon T(T^{n-1}\times[1-\delta,1])\otimes\C\to\C^n, 
\end{equation}
which are fiberwise complex isomorphic. 
 We remark that targets $\C^n$ of the mappings above 
are fibers of $U\times\C^n\to U$, the trivialization of $\xi|_U$. 
 The mapping $df_0^\C$ is homotopic to the bundle mapping 
$G_0^\C\colon T(T^{n-1}\times[0,\delta])\otimes\C\to\C^n$ defined by 
\begin{equation*}
  (G_0^\C)_{(x,t)}:=(df_0^\C)_{(x,\delta)}\colon (T_x(T^{n-1})\times\R)\otimes\C
  \to\C^n,\qquad 0\le t\le\delta. 
\end{equation*}
 Similarly,  The mapping $df_1^\C$ is homotopic to the bundle mapping 
$G_1^\C\colon T(T^{n-1}\times[1-\delta,1])\otimes\C\to\C^n$ defined by 
\begin{equation*}
  (G_1^\C)_{(x,t)}:=(df_1^\C)_{(x,1-\delta)}\colon (T_x(T^{n-1})\times\R)\otimes\C
  \to\C^n,\qquad 1-\delta\le t\le1. 
\end{equation*} 

  Next, we extend the mappings $G_i^\C$, $i=0,1$, above 
to the whole $T(T^{n-1}\times[0,1])\otimes\C$. 
 These $G_i^\C$, $i=0,1$, are independent of the choices 
of $t\in[0,\delta]$ or $t\in[1-\delta,1]$ respectively, 
and are also fiberwise complex isomorphic.  
 Then, for a mapping $\psi\colon T^{n-1}\to\textup{GL}(n,\C)$ 
satisfying $(df_0^C)_{(x,\delta)}=\psi(x){\cdot}(df_1^\C)_{(x,1-\delta)}$, 
we have 
\begin{equation*}
  (G_0^\C)_{(x,t)}=\psi(x){\cdot}(G_1^\C)_{(x,t)}. 
\end{equation*}
 On the other hand, the plastikstufe $\mathcal{P}$ has trivial rotation, 
and a Legendrian strip $\Lambda_1'=f_1(T^{n-1}\times(0,\delta))$ is isotopic 
to a leaf-ribbon of $\mathcal{P}$. 
 In addition, $\Lambda_0'=f_0(T^{n-1}\times(1-\delta,1))$ 
is the standard Legendrian submanifold. 
 Then the mapping $\psi$ is homotopic to the constant mapping 
$e\colon T^{n-1}\to\{e\}\subset\textup{GL}(n,\C)$ 
to the identity $e\in\textup{GL}(n,\C)$. 
% via a homotopy $\psi_t\colon T^{n-1}\to\textup{GL}(n,\C)$, 
% $\psi_0=\psi$, $\psi_1=e$. 
 By this homotopy, we have a homotopy $df_t^\C$ between $df_0^\C$ and $df_1^C$ 
through fiberwise complex isomorphisms (see~\cite{muniplst}). 
 Then we can construct the bundle mapping 
$G^\C\colon T(T^{n-1}\times[0,1])\otimes\C\to\C^n$ as 
\begin{equation*}
  G^\C_{(x,t)}:=
  \begin{cases}
    \ \lft(df_0^\C\rgt)_{(x,c(t))}=(df_0^\C)_{(x,\delta)}=(G_0^\C)_{(x,t)} 
    &(0\le t\le\epsilon)\\
    \ \lft(df_t^\C\rgt)_{(x,(1-c(t))\delta+c(t)(1-\delta)} 
    &(\epsilon\le t\le1-\epsilon) \\
%    \ \psi_{c(t)}(x){\cdot}(df_1^\C)_{(x,1-\delta)} &(\epsilon\le t\le1-\epsilon) \\
    \ \lft(df_1^\C\rgt)_{(x,c(t))}=(df_1^\C)_{(x,1-\delta)}=(G_1^C)_{(x,t)} 
    &(1-\epsilon\le 1),
  \end{cases}
\end{equation*}
where $c\colon[0,1]\to[0,1]$ is some smooth function 
satisfying the following conditions for some small $\epsilon>0$: 
(i)\ $c(t)=0$, for $t\in[0,\epsilon]$, 
(ii)\ $c(t)=1$, for $t\in[1-\epsilon,1]$. 
 We should mention that the mapping $G^\C$ is fiberwise complex isomorphic. 
 Remark that $G^\C$ coincides 
with $G_0^\C$ on a neighborhood of $T^{n-1}\times\{0\}\subset T^{n-1}\times[0,1]$,
and with $G_1^\C$ on a neighborhood of $T^{n-1}\times\{1\}$. 

  By taking the real part of $G^\C$, we have a bundle mapping 
\begin{equation} \label{eq:lagmono}
  \begin{array}{rccc}
    G\colon &T(T^{n-1}\times[0,1]) &\longrightarrow &\xi \\
    &\downarrow & &\downarrow \\
    &T^{n-1}\times[0,1] &\longrightarrow &M. 
  \end{array}
  % G\colon 
  % \begin{CD}
  %   T(T^{n-1}\times[0,1]) @>>> \xi \\
  %   @VVV @VVV \\
  %   T^{n-1}\times[0,1] @>>> M. 
  % \end{CD}
\end{equation}
 It is a Lagrangian monomorphism that coincides 
with $df_0$ and $df_1$ on a neighborhood of 
$T^{n-1}\times\{0\}$, $T^{n-1}\times\{1\}\subset T^{n-1}\times[0,1]$, 
respectively.

  Then we apply the Smale-Hirsch immersion theorem 
(Theorem~\ref{thrm:shimmthm}). 
 On account of Theorem~\ref{thrm:shimmthm}, 
the existence of the bundle monomorphism $G$ implies 
the existence of an immersion $g\colon T^{n-1}\times[0,1]\to M$ 
for which $dg\colon T(T^{n-1}\times[0,1])\to TM$ is homotopic to $G$. 
 In addition, we may assume that $g=f_0$, $f_1$ 
on neighborhoods of $T^{n-1}\times\{0\}$, 
$T^{n-1}\times\{1\}\subset T^{n-1}\times[0,1]$, respectively. 
 Furthermore, by a slight perturbation of $g$ as immersions, 
we obtain an embedding $\tilde{g}\colon T^{n-1}\times[0,1]\to M$. 
 In fact, the dimension of $T^{n-1}\times[0,1]$ is $n$, that of $M$ is $2n+1$, 
and $n>1$. 
 By taking the perturbation fixing near the end $T^{n-1}\times\{0,\ 1\}$, 
we still may assume that $\tilde{g}=f_0$, $f_1$ 
on neighborhoods of $T^{n-1}\times\{0\}$, 
$T^{n-1}\times\{1\}\subset T^{n-1}\times[0,1]$, respectively. 
 Then, setting $\tilde{g}_t:=\tilde{g}(\cdot,t)$, 
we obtain a family $\tilde{g}_t\colon T^{n-1}\to M$ of embeddings 
which satisfies $\tilde{g}_0=f_0^\textup{cr}$ and $\tilde{g}_1=f_1^\textup{cr}$. 

  The obtained isotopy $\tilde{g}_t\colon T^{n-1}\times[0,1]\to M$ 
is what we need to apply Gromov's \textit{h}-principle. 
 In fact, setting $G_t(\cdot):=G(\cdot,t)|_{T_{(\cdot,t)}(T^{n-1}\times\{t\})}$, 
we have a family bundle mapping
\begin{equation*}
  \begin{array}{rccc}
    G_t\colon &T(T^{n-1}) &\longrightarrow &\xi \\
              &\downarrow & &\downarrow \\
              &T^{n-1} &\overset{\tilde{g}_t}{\longrightarrow} &M. 
  \end{array}  
\end{equation*}
 It is a family of isotropic monomorphisms covering $\tilde{g}_t$ 
which satisfies $G_t=df_0|_{T(T^{n-1}\times\{t\})}$ for $t\in[0,1]$ close to $0$, 
and $G_t=df_1|_{T(T^{n-1}\times\{t\})}$ for $t\in[0,1]$ close to $1$, 
and that $G_t$ is homotopic to $d\tilde{g}_t$. 
 In other words, it is a formal isotopy 
between subcritical isotropic embeddings 
$f_0^\textup{cr},\ f_1^\textup{cr}\colon T^{n-1}\to(M,\xi)$. 

  Now, we apply Gromov's \textit{h}-principle (Theorem~\ref{thrm:g_h-pr}) 
to obtain a path connecting subcritical isotropic embeddings 
$f_0^\textup{cr},\ f_1^\textup{cr}$. 
 By Theorem~\ref{thrm:g_h-pr}, we have a family
\begin{equation*}
  \tilde{f}_t\colon T^{n-1}\to(M,\xi)
\end{equation*}
of subcritical isotropic embeddings that satisfies 
$\tilde{f}_0=f_0^\textup{cr}$, $\tilde{f}_1=f_1^\textup{cr}$. 

%  Step 2. 
%
\noindent\underline{\textbf{Step~2:}}\ %---
  Next, we extend $\tilde{f}_t$ to a family of Legendrian embeddings 
of $T^{n-1}\times[0,1]$. 
 First, we endow $\tilde{f}_t$ with symplectically normal ``framings''. 
 Then, using the framings, we extend $\tilde{f}_t$ to Legendrian embeddings 
of $T^{n-1}\times[0,1]$. 

  As the framings, we construct a family $X_t$ 
of nowhere vanishing vector fields 
defined along isotropic submanifolds $\tilde{f}_t(T^{n-1})\subset(M,\xi)$ 
which are tangent to the conformal symplectic normal bundle 
$\sn\lft(\tilde{f}_t(T^{n-1}),M\rgt)$. 
 The conformal symplectic normal bundle 
is defined as 
\begin{equation*}
  \sn\lft(\tilde{f}_t(T^{n-1}),M\rgt)
  :=\lft.T\{\tilde{f}_t(T^{n-1})\}^{\skor}\rgt/T\{\tilde{f}_t(T^{n-1})\}\subset\xi,
\end{equation*}
where the symbol $\skor$ stands for the skew orthogonal subspace 
with respect to the symplectic form $d\alpha$ on $\xi=\ker\alpha$. 
 In other words, $X_t$ are chosen so that $X_t\oplus T\{\tilde{f}_t(T^{n-1})\}$ 
are Legendrian. 

  Around the ends $\{0,1\}\subset[0,1]$, 
we already have such framings. 
 In fact, the original embeddings $f_0,\ f_1\colon T^{n-1}\times[0,1]\to(M,\xi)$ 
are Legendrian. 
 Then the vector field $df_i(\uvv s)$, $i=0,1$, for coordinate $s\in[0,1]$ 
are the required framings. 
 Let $X_i$, $i=0,1$, denote such framings. 

  We construct a family $X_t$ of framings connecting $X_0$ and $X_1$ as follows.
 Recall that we have constructed a Lagrangian monomorphism 
$G\colon T(T^{n-1}\times[0,1])\to\xi$ (see Equation~\eqref{eq:lagmono}). 
 Considering $df_i$ on a neighborhood 
of $T^{n-1}\times\{i\}\subset T^{n-1}\times[0,1]$, $i=0,1$ respectively, 
we have a family $F_t\colon T(T^{n-1}\times[0,1])\to\xi$ 
of Legendrian monomorphisms 
that connects $df_0=F_0$ and $df_1=F_1$. 
 Setting $X_t:=F_t(\uvv s)$, we obtain a required family $X_t$ of vector fields,
where $s$ is a coordinate of $[0,1]$. 

  From the isotropic isotopy $\tilde{f}_t\colon T^{n-1}\to(M,\xi)$ 
with the isotropic framings $X_t$, 
we obtain a family $\bar{f}_t\colon T^{n-1}\times[0,1]\to(M,\xi)$ 
of Legendrian embeddings that satisfies 
$\bar{f}_i(T^{n-1}\times[0,1])=\Lambda_i$ for $i=0,1$. 
 In fact, the the given Legendrian submanifolds $\Lambda_i$ 
can be shrunk sufficiently to the core tori $f_i(T^{n-1})$ 
by a Legendrian isotopy along isotropic vector fields $(f_i)_\ast(\uvv s)$. 
 Then the family of embeddings of $T^{n-1}\times[0,1]$ constructed 
by $\tilde{f}_t(T^{n-1})$ and the framings $X_t$ implies 
the isotopy of embeddings connecting the shrunk $\Lambda_i$. 
 Thus, we obtain a family $\bar{f}_t\colon T^{n-1}\times[0,1]\to(M,\xi)$ 
of Legendrian embeddings that satisfies 
$\bar{f}_i(T^{n-1}\times[0,1])=f_i(T^{n-1}\times[0,1])=\Lambda_i$ for $i=0,1$. 

%  Step 3. 
%
\noindent\underline{\textbf{Step~3:}}\ %---
  Last of all we extend the isotopy $\bar{f}_t$ of Legendrian embeddings 
to an ambient isotopy. 

  We first review the ambient isotopy theorem (see for example \cite{geitext}). 
 It implies that an isotopy of isotropic embeddings can be extended 
to an isotopy of global contact diffeomorphisms. 
%
% Proposition 4.5. -------------------------------------------------------------
%
\begin{prop}\label{prop:amisotthm}
   Let $\psi_t\colon N\to(M,\xi)$ be an isotopy of isotropic embeddings 
  of a closed manifold $N$ into a contact manifold $(M,\xi)$. 
   Then there exists a compactly supported global contact isotopy 
  $\Psi_t\colon(M,\xi)\to(M,\xi)$ 
  that restricts to the given isotopy $\psi_t$. 
   In other words, $\Psi_t$ satisfies $\Psi_0=\textup{id}$, 
  and $\Psi_t\circ\psi_0=\psi_t$. 
\end{prop} %--------------------------------------------------------------------

  Applying the ambient isotopy theorem, we obtain the isotopy $\vphi_t$ 
required in the statement of Lemma~\ref{lem:key}. 
 In fact, we have constructed an isotopy 
$\bar{f}_t\colon T^{n-1}\times[0,1]\to(M,\xi)$ of Legendrian embeddings 
which satisfies $\bar{f}_0=f_0$ and $\bar{f}_1=f_1$. 
 Then, applying Proposition~\ref{prop:amisotthm}, 
we obtain a compactly supported  ambient contact isotopy 
$\vphi_t\colon(M,\xi)\to(M,\xi)$ 
that satisfies $\vphi_0=\textup{id}$, $\vphi_t\circ\bar{f}_0=\bar{f}_t$. 
 Especially, since 
$\bar{f}_i(T^{n-1}\times[0,1])=f_i(T^{n-1}\times[0,1])=\Lambda_i$ for $i=0,1$, 
it follows that $\vphi_1(\Lambda_0)=\Lambda_1$. 
 Further, since $\tilde{f}_t(T^{n-1})\subset(M,\xi)$ does not intersect 
the plastikstufe $\mathcal{P}$, so does $\bar{f}_t(T^{n-1}\times[0,1])$ 
for any $t\in[0,1]$. 
 Then $\vphi_t$ is constructed so that it is identity near $\mathcal{P}$. 
 Thus, the isotopy $\vphi_t$ is the required one. 
 Lemma~\ref{lem:key} has been proved. 
\end{proof}

  Now, we show Theorem~\ref{thrm:pstc2loose}. 
\begin{proof}[Proof of Theorem~\ref{thrm:pstc2loose}]
  The fundamental idea is to apply Theorem~\ref{thrm:slidingOTD} 
to a family of isotropic curves simultaneously. 
 In order to do that, we should move a Legendrian submanifold 
to a suitable position. 
 We need Lemma~\ref{lem:key} for that. 

  First, we move the given Legendrian submanifold by Lemma~\ref{lem:key}. 
 Let $\Lambda\subset(M,\xi)$ be a Legendrian submanifold 
disjoint from the plastikstufe $\mathcal{P}$. 
 Recall that the plastikstufe $\mathcal{P}\subset(M,\xi)$ 
has toric core with trivial rotation from the assumption. 
 Then, from Lemma~\ref{lem:key}, 
we have an isotopy $\vphi_t\colon(M,\xi)\to(M,\xi)$ 
and a Legendrian submanifold $\Lambda_0\subset\Lambda$ 
for which $\vphi_1(\Lambda_0)$ lies in the standard tubular neighborhood 
$U_\textup{PS}\subset(M,\xi)$ of $\mathcal{P}$ 
and diffeomorphic to $I\times T_0^{n-1}\subset\R^3\times T^\ast T^{n-1}$ 
by $\vphi_\textup{PS}\colon(U_\textup{PS},\xi)
\to\lft(\R^3\times T^\ast T^{n-1},\xi_\textup{PS}\rgt)$, 
where $I=(0,\epsilon)\subset(\R^3,\xi_\ot)$ is a Legendrian open segment 
and $T_0^{n-1}\subset T^\ast T^{n-1}$ is the zero section. 
 We may use the same notation $\Lambda,\Lambda_0$ 
for the modified Legendrian submanifolds 
$\vphi_1(\Lambda), \vphi_1(\Lambda_0)\subset(M,\xi)$. 

  Next, we apply an idea of Theorem~\ref{thrm:slidingOTD} 
to the Legendrian submanifold $\Lambda$. 
 We discuss in the standard tubular neighborhood $U_\textup{PS}\subset(M,\xi)$ 
of the plastikstufe $\mathcal{P}$ with the contact embedding 
$\vphi_\textup{PS}\colon(U_\textup{PS},\xi)
\to\lft(\R^3\times T^\ast T^{n-1},\xi_\textup{PS}\rgt)$. 
 By taking $U_\textup{PS}$ appropriately small, 
we may assume $\Lambda\cap U_\textup{PS}=\Lambda_0$. 
 In other words, we discuss in $\lft(\vphi_\textup{PS}(\ups),\xi_\textup{PS}\rgt)
\subset\lft(\R^3\times T^\ast T^{n-1},\xi_\textup{PS}\rgt)$ 
and use the same notation $\Lambda_0$ for 
$\vphi_\textup{PS}(\Lambda_0)
\subset\lft(\R^3\times T^\ast T^{n-1},\xi_\textup{PS}\rgt)$. 
 Recall that in the standard neighborhood, the plastikstufe $\mathcal{P}$ 
is $D_\ot^2\times T_0^{n-1}\subset\R^3\times T^\ast T^{n-1}$ 
and that the Legendrian submanifold $\Lambda_0$ is disjoint from $\mathcal{P}$. 
 From the discussion above, $\Lambda_0$ is 
$I\times T_0^{n-1}\subset\R^3\times T^\ast T^{n-1}$. 
 Each object $\mathcal{P}$, $\Lambda_0$ is a direct product, 
with the zero-section $T_0^{n-1}\subset T^\ast T^{n-1}$, 
of an overtwisted disc 
$D_\ot^2\subset(\R^3,\ker\alpha_\ot)$ 
and a Legendrian segment $I\subset(\R^3,\ker\alpha_\ot)$, respectively. 
 Then, for each $p\in T_0^{n-1}$, we apply Theorem~\ref{thrm:slidingOTD} 
simultaneously. 
 We obtain a Legendrian open segment 
$\tilde{I}\subset(\R^3,\ker\alpha_\ot)$ % isotopic to $I$ 
which is a negative destabilization of $I$, 
and a Legendrian submanifold 
$\tilde{I}\times T_0^{n-1}\subset(\R^3\times T^\ast T^{n-1},\xi_\textup{PS})$. 
 Let $\tilde{\Lambda}_0$ denote $\tilde{I}\times T_0^{n-1}$ 
or the corresponding Legendrian submanifold in $(M,\xi)$. 
 Comparing $\Lambda_0\subset\Lambda$ and $\tilde\Lambda_0$, 
we look for a Loose chart for $\Lambda\subset(M,\xi)$. 

  We look for the loose chart 
in $\lft(\vphi_\textup{PS}(\ups),\xi_\textup{PS}\rgt)
\subset\lft(\R^3\times T^\ast T^{n-1},\xi_\textup{PS}\rgt)$. 
 Let $U\subset(\R^3,\xi_\ot=\ker\alpha_\ot)$ 
be an open ball including $I,\tilde{I}$, 
and $D^\ast T^{n-1}\subset T^\ast T^{n-1}$ a disc-bundle for some metric 
that satisfy $U\times D^\ast T^{n-1}\subset\vphi_\textup{PS}(U_\textup{PS})$. 
 We find appropriate subsets in both $U$ and $D^\ast T^{n-1}$ 
in what follows. 

  We find a neighborhood of $I\subset U\subset(\R^3,\xi_\ot)$ 
contactomorphic to a convex open subset in $(\R^3,\xi_\textup{std})$ 
where $I$ is a negative stabilization as follows. 
 From the view point of $\tilde{I}$, 
there exists a tubular neighborhood $\tilde{W}\subset(U,\xi_\ot)$,
including $I$ as well after some isotopy, 
which is contactomorphic to the standard tubular neighborhood $W$ of 
$\{x=0,z=0\}\subset\lft(\R^3,\xi_\textup{std}=\ker(dz-ydx)\rgt)$. 
 Note that the image of $I$ in $W\subset(\R^3,\xi_\textup{std})$ is 
a negative stabilization of the image of $\tilde{I}$. 
 We abuse the same notation $I,\tilde{I}$ even in $(\R^3,\xi_\textup{std})$. 
 Then 
$W\times D^\ast T^{n-1}\subset\lft(\R^3\times T^\ast T^{n-1},\xi_\textup{st}\rgt)$ 
is a tubular neighborhood of $\tilde{I}\times T_0^{n-1}$
which is contactomorphic to a neighborhood 
of $\tilde{\Lambda}_0=\tilde{I}\times T_0^{n-1}\subset(\ups,\xi)$. 

  Further, we find a sufficiently large, in some sense,  subspace 
in $\lft(D^\ast T^{n-1},\lambda_\textup{can}\rgt)$. 
 For any metric, the set $V_\rho:=\{(\mathbf{p},\mathbf{q})\in T^\ast T^{n-1}\mid 
|\mathbf{p}|<\rho,|\mathbf{q}|<\rho\}$ is included in $D^\ast T^{n-1}$ 
for some $\rho>0$. 
 Then take a constant $\epsilon>0$ so that $\epsilon<\rho$. 
 For the constant $\epsilon$, 
let $L_\epsilon$ be a Legendrian curve in $(\R^3,\xi_\textup{std})$ defined as 
\begin{equation*}
  \lft(\epsilon t^2,\dfrac{15}{4}\epsilon,\dfrac{\epsilon^2}{2}(3t^2-5t^3)\rgt),
\end{equation*}
that is isotopic to a negative stabilization 
of $\{x=0,z=0\}\subset(\R^3,\xi_\textup{std})$. 
 Since $L_\epsilon$ is isotopic to $I$, 
$\lft(W\times D^\ast T^{n-1},L_\epsilon\times T_0^{n-1}\rgt)$ is contactomorphic to 
$\lft(W\times D^\ast T^{n-1},\Lambda_0=I\times T_0^{n-1}\rgt)$. 

  Then the pair $\lft(N,\tilde{\Lambda}_0'\rgt)$ of submanifolds of $(M,\xi)$, 
corresponding to $\lft(W\times V_\rho,L_\epsilon\times T_0^{n-1}\rgt)$ 
in $\lft(\R^3\times T^\ast T^{n-1},\xi_\textup{std}\rgt)$ 
by $\Psi\circ\vphi_\textup{PS}$, is contactomorphic to a loose chart. 
 In fact, by the contactomorphism 
\begin{equation*}
  h\colon(\R^3,\xi_\textup{std})\to(\R^3,\xi_\textup{std}),\quad 
  (x_i,y_i,z)\mapsto(x_i/\epsilon,y_i/\epsilon,z/\epsilon^2), 
\end{equation*}
$W\times V_\rho$ and $L_\epsilon\times T_0^{n-1}$ is mapped 
to $W'\times V_{\rho/\epsilon}$ and $L'_\epsilon\times T_0^{n-1}$ respectively, 
where $W'\subset\R^3$ is an open set, $L'_\epsilon\subset(\R^3,\xi_\textup{std})$
is a Legendrian segment isotopic to $L_\epsilon$. 
 Since $\epsilon<\rho$, we have $\rho/\epsilon>1$. 

  Thus, Theorem~\ref{thrm:pstc2loose} has been proved. 
\end{proof}

%--- 4.2.2. 
\subsubsection{Proof of overtwistedness}\label{sec:pfps2pt}
  Now, we show the main issue of this subsection. 
 We show that the existence of a small plastikstufe 
with toric core and trivial rotation 
implies overtwistedness. 
%
% Proposition 4.6. -------------------------------------------------------------
%
\begin{prop}\label{prop:suff}
  Let $(M,\xi)$ be a contact manifold of dimension~$2n+1>3$. 
  The contact structure $\xi$ is overtwisted 
  if there exists a small plastikstufe with toric core 
  that has trivial rotation. 
\end{prop}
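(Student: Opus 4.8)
The plan is to read Proposition~\ref{prop:suff} off directly from Theorem~\ref{thrm:pstc2loose} together with Proposition~\ref{prop:ls2ot}, so the whole proof reduces to exhibiting a suitable Legendrian test object disjoint from the plastikstufe. First I would pass to the connected component of $M$ carrying $\mathcal{P}$, so that we may assume $M$ connected. Since a plastikstufe is compact, $M\setminus\mathcal{P}$ is a nonempty open subset of $M$; by Darboux's theorem it therefore contains a contact ball contactomorphic to an open ball in $(\R^{2n+1},\xist)$ into which (after rescaling) the standard model containing the trivial Legendrian sphere embeds. This produces a trivial Legendrian sphere $\Lambda_0\subset(M,\xi)$ in the sense of Subsection~\ref{sec:overtwisted}, and since it lies entirely in $M\setminus\mathcal{P}$ it is automatically disjoint from $\mathcal{P}$.

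With this $\Lambda_0$ at hand the hypotheses of Theorem~\ref{thrm:pstc2loose} are met: $\mathcal{P}$ is a small plastikstufe with toric core and trivial rotation, and $\Lambda_0$ is a Legendrian submanifold disjoint from it. Hence $\Lambda_0$ is loose. Because $2n+1>3$, looseness of the trivial Legendrian sphere puts us in position to apply Proposition~\ref{prop:ls2ot}, which concludes that $\xi$ is overtwisted. That is precisely the assertion of Proposition~\ref{prop:suff}, and combined with Proposition~\ref{prop:necess} (the reverse implication, proved in Subsection~\ref{sec:OT2PS}) it completes the proof of Theorem~\ref{thma}.

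I do not expect a serious obstacle at this final stage: the substantive work is already carried out in Lemma~\ref{lem:key} and Theorem~\ref{thrm:pstc2loose}. The only points needing a line of care are (i) that \emph{smallness} of $\mathcal{P}$ — or, more economically, merely its compactness — genuinely frees up a Darboux chart in $M\setminus\mathcal{P}$ in which to place a trivial Legendrian sphere, which is immediate since $M\setminus\mathcal{P}$ is open and nonempty; and (ii) that the sphere produced this way really is a ``trivial Legendrian sphere'' in the sense to which Proposition~\ref{prop:ls2ot} applies, which holds by the very definition of triviality, namely that it is the image of the standard $\Lambda_0\subset(S^{2n+1},\eta_0)$ under a contact embedding of a ball.
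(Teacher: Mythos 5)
Your proposal is correct and follows essentially the same route as the paper: produce a trivial Legendrian sphere disjoint from the plastikstufe, invoke Theorem~\ref{thrm:pstc2loose} to see it is loose, and conclude via Proposition~\ref{prop:ls2ot}. The only cosmetic difference is that the paper obtains disjointness by ``applying some isotopy'' to a given trivial Legendrian sphere, whereas you place one directly in a Darboux chart in the complement; both are fine.
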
 %--------------------------------------------------------------------

\begin{proof}
  Assume that there exists a small plastikstufe
$\mathcal{P}_{T^{n-1}}$ with toric core and trivial rotation 
in a contact manifold $(M,\xi)$ of dimension~$2n+1$. 
 Let $\Lambda_0\subset(M,\xi)$ be the trivial Legendrian sphere 
(see Subsection~\ref{sec:overtwisted}). 
 Applying some isotopy, we may assume 
that $\Lambda_0\cap\mathcal{P}_{T^{n-1}}=\emptyset$. 
 Then, from Theorem~\ref{thrm:pstc2loose}, 
the Legendrian submanifold $\Lambda_0$ is loose. 
 By means of Proposition~\ref{prop:ls2ot}, 
the contact structure $\xi$ is overtwisted.
\end{proof}

%

%%% Section 5. %%%%%%%%%%%%%%%%%%%%%%%%%%%%%%%%%%%%%%%%%%%%%%%%%%%%%%%%%%%%%%%%%
\section{Modification that creates plastikstufes with toric core}
\label{sec:gltw}
%%%%%%%%%%%%%%%%%%%%%%%%%%%%%%%%%%%%%%%%%%%%%%%%%%%%%%%%%%%%%%%%%%%%%%%%%%%%%%%%
  The motivation of this paper is a generalized Lutz twist 
introduced in~\cite{art21}. 
 The modification makes a contact manifold overtwisted in any dimension. 
 We should note that it is proved directly in~\cite{art21} 
that the modification makes an $S^1$-family of overtwisted discs. 
 On the other hand, using a result in this paper, 
it is confirmed that the modification makes a contact manifold overtwisted 
from another point of view. 
 In this section, we find a small plastikstufe 
with toric core that has trivial rotation in the modification. 

  First, we roughly recall a generalized Lutz twist introduced in~\cite{art21}. 
 Let $(M,\xi)$ be a contact manifold of dimension~$2n+1$, 
and $\Gamma\subset(M,\xi)$ an embedded circle transverse to $\xi$. 
 Along $\Gamma$ there exists the standard tubular neighborhood 
contactomorphic to a tubular neighborhood of the transverse circle 
$S^1\times\{0\}\subset(S^1\times\R^{2n},\xi_0=\ker\alpha_0)$, where 
\begin{equation*}
  \alpha_0=d\phi+\sum_{i=1}^n r_i^2d\theta_i
\end{equation*}
with respect to the cylindrical coordinates $(\phi,r_i,\theta_i)$ 
of $S^1\times\R^{2n}$. 
 The generalized Lutz twist is defined as a replacement of 
such a small tubular neighborhood 
with the same neighborhood with a certain contact structure, 
without changing the manifold. 
 The substitute is constructed in~\cite{art21}. 
 Leaving the precise definition to~\cite{art21}, 
we introduce an important part for the construction of the substitute, 
where we find plastikstufes. 
 Let $\zeta$ be the hyperplane field on $S^1\times\R^{2n}$
defined as $\zeta=\ker\omtw$, where
\begin{equation*}
  \omtw
  =\prod_{i=1}^n(\cos r_i^2)d\phi
  +\sum_{i=1}^n(\sin r_i^2)d\theta_i, % \\
  % &=\ker\lft((\cos r_1^2)\cdots(\cos r_n^2)d\phi
  % +(\sin r_1^2)d\theta_1+\dots+(\sin r_n^2)d\theta_n\rgt), 
\end{equation*}
with respect to the cylindrical coordinates $(\phi,r_i,\theta_i)$ 
of $S^1\times\R^{2n}$. 
 Unfortunately, it is not a contact structure but a confoliation. 
 In other words, it has a certain non-contact locus. 
 However, it is proved in~\cite{art21} 
that it can be approximated to a contact structure. 

  It is proved, in~\cite{art21}, 
that there exists an $S^1$-family of plastikstufes with toric core 
in $(S^1\times\R^{2n},\zeta)$. 
 Although it is described, in \cite{art21}, in terms of the so-called 
bordered Legendrian open book (bLob), 
a generalization of plastikstufe, 
it can be translated easily. 
 The object to be observed is the submanifold 
\begin{equation*}%\label{eq:blobfam}
  P:=\lft\{(\phi,r_1,\theta_1,\dots,r_n,\theta_n)\in S^1\times\R^{2n}\;\lft|\;
  r_1\le\sqrt{\pi},\ r_2=\dots=r_n=\sqrt{\pi}\rgt.\rgt\}. 
\end{equation*}
 It is diffeomorphic to $S^1\times D^2\times T^{n-1}$. 
 From \cite{art21}, $P$ lies in the contact locus of $(S^1\times\R^{2n},\zeta)$ 
and $P$ is an $S^1$-family of bLobs whose bindings are $T^{n-1}$. 
 We show that each 
\begin{align}\label{eq:psparam}
  P_s:=&\lft\{(\phi,r_i,\theta_i) \in S^1\times\R^{2n}\;
         \lft|\; \phi=s,r_1\le\sqrt{\pi},r_2=\dots=r_n=\sqrt{\pi}\rgt.\rgt\}, 
         \qquad s\in S^1, \notag \\
  =&\lft\{(s,r_1,\theta_1,\sqrt{\pi},\theta_2,\dots,\sqrt{\pi},\theta_n)
     \in S^1\times\R^{2n}\;\left|\; r_1\in[0,\sqrt{\pi}],\theta_1,\dots,\theta_n
     \in S^1\rgt.\rgt\}
\end{align}
is a plastikstufe with toric core $T^{n-1}$ 
in terms of the definition of plastikstufe in Subsection~\ref{sec:plastikstufe}.
 In fact, $P_s$ is diffeomorphic to $D^2\times T^{n-1}$. 
 With respect to this correspondence, 
a point $z\in D^2$ corresponds to $(r_1^z,\theta_1^z)\in\R^2$. 
 Then the submanifold 
\begin{equation*}
  \{z\}\times T^{n-1}
  =\lft\{(s,r_1^z,\theta_1^z,\sqrt{\pi},\theta_2,\dots,\sqrt{\pi},\theta_n)
  \in S^1\times\R^{2n}\mid \theta_2,\dots,\theta_n\in S^1\rgt\}\subset P_s
\end{equation*}
is tangent to $\zeta=\ker\omtw$, 
% where $\omtw=\prod(\cos r_i^2)d\phi+\sum(\sin r_i^2)d\theta_i$, 
since $\omtw|_{T(\{z\}\times T^{n-1})}=(-1)^{n-1}\cos (r_i^z)^2d\phi$. 
 On the other hand, a point $b\in T^{n-1}$ corresponds to 
$\lft(\sqrt{\pi},\theta_2^b,\dots,\sqrt{\pi},\theta_n^b\rgt)\in\R^{2n-2}$. 
 Then on the submanifold 
\begin{equation*}
  D^2\times\{b\}=\lft\{(s,r_1,\theta_1,\sqrt{\pi},\theta_2^b,\dots,\sqrt{\pi}, 
  \theta_n^b)
  \in S^1\times\R^{2n}\mid r_1\in(0,1],\theta_1\in S^1\rgt\}\subset P_s, 
\end{equation*}
the $1$-form restricts to $\omtw|_{T(D^2\times\{b\})}=(\sin r_1^2)d\theta_1$. 
 Then the singular foliation on $D^2\times\{b\}$ 
generated by $\zeta\cap T(D^2\times\{b\}$ is like Figure~\ref{fig:3otdiscs}~(2).
 Thus, we have confirmed that $P_s$ is a plastikstufe. 

  It is clear that the plastikstufe $P_s\subset S^1\times\R^{2n}$ is ``small.'' 
 In fact, $P_s$ lies in the boundary of the polydisc 
$\{s\}\times (D^2)^n\subset S^1\times\R^{2n}$. 
 And the entire polydisc is taken in, 
in the modification defined in~\cite{art21}. 

  It remains to show that the plastikstufe has trivial rotation. 
 According to the Definition~\ref{df:rtcl} in Subsection~\ref{sec:torictrivial},
we show that the leaf ribbon of the Plastikstufe $P_s$ 
is Legendrian isotopic to the standard Legendrian $(0,1)\times T^{n-1}$ 
in $(S^1\times\R^{2n},\zeta)$. 
 A leaf ribbon of $P_s$ is 
\begin{equation*}
  LR_{\theta_1^c}
  :=\lft\{(s,r_1,\theta_1^c,\sqrt{\pi},\theta_2,\dots,\sqrt{\pi},\theta_n)
  \in S^1\times\R^{2n}\; \lft|\; 
  r_1\in(0,\sqrt{\pi}), \theta_2,\dots,\theta_n\in S^1\rgt.\rgt\}
  \cong (0,1)\times T^{n-1} 
\end{equation*}
for a constant $\theta_1^c\in S^1$ 
comparing it with $P_s$ in Equation~\eqref{eq:psparam}. 
 Note that $\lan\uvv{r_1},\uvv{r_2}\ran$ generates an isotropic plane field 
for $\zeta=\ker\omtw$. %, 
% where $\omtw=\prod\lft(\cos r_i^2\rgt)d\phi+\sum\lft(\sin r_i^2\rgt)d\theta_i$. 
 Then $LR_{\theta_1^c}$ is Legendrian isotopic to
\begin{align*}
  \widetilde{LR}_{\theta_1^c}
  :=&\lft\{(s,\sqrt{\pi},\theta_1^c,r_2,\theta_2,
      \sqrt{\pi},\theta_3\dots,\sqrt{\pi},\theta_n)
      \in S^1\times\R^{2n}\; \lft|\; 
      r_2\in(0,\sqrt{\pi}), \theta_2,\dots,\theta_n\in S^1\rgt.\rgt\} \\
  \cong& (0,1)\times T^{n-1}. 
\end{align*}
  By regarding 
$\lft\{(s,\sqrt{\pi},\theta_1^c,r_2,\theta_2,(0,0),\dots,(0,0))
\in S^1\times\R^{2n}\;
\lft|\;r_2\in[0,\sqrt{\pi}],\theta_2\in S^1\rgt.\rgt\}=D^2$, 
the modified leaf ribbon $\widetilde{LR}_{\theta_1^c}$ can be considered as 
\begin{equation*}
  \{s\}\times\lft\{(\sqrt{\pi},\theta_1^c)\rgt\}
  \times \lft(D^2\setminus\{0\}\rgt)\times T^{n-2}
  \subset\lft(S^1\times\R^2\times\R^2\times\R^{2(n-2)},\zeta\rgt),  
\end{equation*}
the standard embedded Legendrian $(0,1)\times T^{n-1}$. 
 This implies that the plastikstufe $P_s$ has trivial rotation. 

%
% References
%

\medskip 

\begin{flushleft}
Department of Mathematics, \\
Hokkaido University, \\
Sapporo, 060--0810, Japan. \\
\medskip
e-mail: j-adachi@math.sci.hokudai.ac.jp
\end{flushleft}
\end{document}